\documentclass[12pt, reqno]{amsart}

\usepackage{amsmath}
\usepackage{amsfonts}
\usepackage{amssymb}
\usepackage{amsthm}
\usepackage[shortlabels]{enumitem}
\usepackage{graphicx}
\usepackage{color}
\usepackage{dsfont}
\usepackage{MnSymbol}
\usepackage{enumitem}
\usepackage{extpfeil}
\usepackage{mathtools}
\usepackage{url}
\usepackage[margin=1in]{geometry}
\usepackage{fancyhdr}
\usepackage[all,cmtip]{xy}
\usepackage{hyperref} %% autoreferences, works only with PdfLaTeX!

\newtheorem{theorem}{Theorem}[section]

\newtheorem{lemma}[theorem]{Lemma}
\newtheorem{prop}[theorem]{Proposition}

\newtheorem{cor}[theorem]{Corollary}

\theoremstyle{remark}
\newtheorem{remark}[theorem]{Remark}

\theoremstyle{definition}

\def\<{\langle}
\def\>{\rangle}
\def\-{\overline}

\def\g{\gamma}
\def\a{\alpha}
\def\b{\beta}

\numberwithin{equation}{section}

\newcommand{\Z}{\mathbb{Z}}

\begin{document}

\title[Lie superalgebras]
{HNN extensions of Lie superalgebras}

\author{Dessislava H. Kochloukova}

\address
{Department of Mathematics, State University of Campinas (UNICAMP), 13083-859, Campinas, SP, Brazil}
\email{desi@ime.unicamp.br}

\author{Victor Petrogradsky}

\address{Department of Mathematics, University of Brasilia, 70910-900 Brasilia DF, Brazil}
\email{petrogradsky@rambler.ru}
\thanks{The first author was partially supported by
 CNPq 305457/2021-7 and  FAPESP
18/23690-6.
The second author was partially supported by grant FAPESP 2022/10889-4.}

\subjclass{
17B01, % Identities, free Lie (super)algebras
17A61, % Gr\"obner-Shirshov bases in nonassociative algebras
%17B65, % Infinite-dimensional Lie (super)algebras
%17B70, % Graded Lie (super)algebras
%17A70, % Superalgebras
16Z10, % Gröbner-Shirshov bases
16S15 % Finite generation, finite presentability, normal forms (diamond lemma, term-rewriting)
}

\keywords{free Lie superalgebras,  Gr\"obner-Shirshov basis, composition lemma, HNN extension} 

\begin{abstract}
We explicitly describe the structure of HNN extensions of Lie superalgebras. 
We specify their bases. 
Moreover, we prove that the HNN extension is a direct sum of two subalgebras: original Lie superalgebra,  and
the free Lie superalgebra, which free generators are explicitly described. 
We apply this result to study finite generation of an ideal in a finitely presented Lie superalgebra.
As an important tool,  we develop Gr\"obner-Shirshov basis theory for Lie superalgebras by establishing a normal form theorem in terms of admissible bracketings.  

\end{abstract}
\maketitle
%*******************************************************************************************************
\section*{Introduction}

Lie superalgebras play important role in modern mathematics~\cite{Kac77,Scheunert}.
Free Lie superalgebras are defined in a standard way~\cite{BMPZ}.
Any subalgebra of a free Lie algebra is again free (Shirshov-Witt, see e.g.~\cite{Ba}).
Similarly, A.A.~Mikhalev proved that any homogeneous subalgebra in the free Lie superalgebra is again free,
the characteristic of the field being different from 2,3~\cite[Theorem 3.3]{M}, \cite[Theorem 3.15]{BMPZ}.
Closed dimension formulas for homogeneous components of free Lie superalgebras of finite rank were obtained by Petrogradsky~\cite{Pe00}.
The growth of free solvable Lie superalgebras of finite rank was specified by Klementyev and Petrogradsky~\cite{KlPe05}.

For free Lie algebras, free restricted Lie algebras and free Lie superalgebras a version of Schreier's formula holds in terms of formal power series~\cite{Pe00sch}. 
We are interested  in Lie algebras and superalgebras defined in terms of generators and relations. 
When both defining generators and relations are finite sets we have a finitely presented Lie algebra or superalgebra. 
In the case of graded Lie algebras finite presentability has homological description using the  first and second homology groups, see the Weigel paper~\cite{Thomas}. 

In combinatorial group theory the following HNN construction plays important role, \cite{LyndonShupp}.
 Let $C$ be a group containing two  isomorphic subgroups $\phi: A\to B$. 
The {\it HNN-extension} is the universal group generated by $A$ and one more 
element $t$ conjugation by which coincides with the isomorphism $\phi$~\cite{HNN,LyndonShupp}:
$$ H:=\langle C,t \mid t^{-1} a t=\phi(a), a\in A\rangle. $$   
The definition of HNN extension of ordinary Lie algebras was  given by Wasserman in~\cite{W}, 
and the case of restricted Lie algebras was dealt with by Lichtman and Shirvani~\cite{L-S}.
They prove that the original algebra is embedded in the HNN extension and  under some conditions the HNN extension contains a non-trivial free (restricted) Lie subalgebra.
The HNN extension of Lie superalgebras was defined  and embedding result was proved 
by Ladra, Páez-Guillán, Zargeh in~\cite{L-P-Z}.
An embedding theorem of HNN extensions was also established in case of Leibnitz algebras in~\cite{LSZ}.
\medskip

Let $L$ be a Lie superalgebra with a proper subalgebra $A$ and $d:A\to L$ a  superderivation of degree $|d| \in \{0, 1 \}$.
One defines the HNN extensions by presentation (see details below):
\begin{equation*} %%\label{HNN0} 
H  :=  \langle L, t \ | \ [t,a] = d(a), a \in A \rangle. 
\end{equation*}  
 
\medskip

As the first main result we prove the following structure theorem for HNN extensions of Lie superalgebras. 
\medskip 
 
\noindent {\bf Theorem A.}  {\it Let $K$ be a field of characteristic different from 2 and 3.
Let $X=\{x_i\mid i\in\Xi\}$ form a linear basis of the Lie superalgebra $L$ modulo its proper subalgebra $A$, and $d:A\to L$ a superderivation.
Then the respective HNN extension of $L$ is a direct sum of two subalgebras
$$ H\cong L\oplus F(W), $$
where $F(W)$ is the free Lie superalgebra with the free generating set
\begin{equation*}%\label{W}
 W:=\{ [t x_{i_1}x_{i_2}\cdots\, x_{i_s}] \mid x_{i_j}\in X,\ i_1\le i_2\le \cdots\le  i_s,\ s \ge 0  \},
\end{equation*}
where odd $x_k$ appear at most once.}

\medskip
Thus, we essentially generalize result of Wasserman in case of Lie algebras~\cite[Th.~5.1 and Cor.~5.3]{W}, 
because unlike that paper and~\cite{L-P-Z} we explicitly describe structure of any HNN-extension in generality of Lie superalgebras. 
Similar structure results were also proved in case of restricted Lie algebras by Lichtman and Shirvany~\cite[Proposition 1]{L-S}. %
That approach was based on theory of free products of associative algebras and restricted Lie algebras.
Our results are analogues of similar classical results on HNN extensions in group theory~\cite{HNN,LyndonShupp}.

We develop the theory of HNN extensions of Lie superalgebras started in \cite{L-P-Z,L-S}. 
To prove our main Theorem~A, we first prove a normal form theorem (Theorem \ref{shirshov})
and find an explicit basis for the HNN extensions of Lie superalgebras (Theorem~\ref{basisHNN}).
Our approach is based on Gr\"obner-Shirshov basis theory, which in case of Lie superalgebras was introduced
by A.A.Mikhalev~\cite{M89}, see further developments and  applications in~\cite{BMPZ,B-K-L-M,Men25}.
We also establish a normal form basis theorem for the factor algebra in terms of admissible bracketings 
for an arbitrary Lie superalgebra with a known Gr\"obner-Shirshov basis (Theorem~\ref{linear-basis2}). 
We expect this generalization to have further applications. 
As an application of our Theorem~A, we prove the following result.

\medskip
\noindent {\bf Theorem B.} {\it  Let $K$ be a field of characteristic different from 2 and 3.
Let $\widetilde{L}$ be a finitely presented Lie superalgebra, 
and {  $I$ is an ideal of $\widetilde L$}  such that $\widetilde{L}/ I$ is a free Lie superalgebra on one generator. 
Assume further that $\widetilde{L}$ does not contain  a non-abelian free Lie superalgebra.  
Then $I$ is finitely generated as a Lie superalgebra. In particular, 
the result holds for $\widetilde{L}$ finitely presented, soluble Lie superalgebra.
}
\medskip

\section{Preliminaries on Lie superalgebras}

All algebras  considered in this paper are over a fixed field $K$ of {\bf characteristic  not 2 and 3}. 
This restriction is justified by the fact that
the theory of free Lie superalgebras was developed under this assumption~\cite{BMPZ}.
A Lie superalgebra is a  linear  $\mathbb{Z}_2$-graded algebra $L = L_0 \oplus L_1$ with product called a Lie superbracket $[ - , - ]$
such that
\begin{enumerate}
\item
$[x,y] = - ( -1)^{|x| |y|} [y,x]$,\  (super anti-commutativity),
\item
 $(-1)^{|x| | z|}  [x, [y,z]] + (-1)^{|y| |x|}[y, [z,x]] + (-1)^{|z| |y|} [z, [x,y]] = 0$\  (super Jacobi identity)

Since $char(K) \not= 3$ the Jacobi identity implies that $[x, [x,x]] = 0$ for all $x \in L_1$.

\end{enumerate}

Using the super-anticommutativity, the super Jacobi identity is equivalent to the fact that the adjoint mapping  $ad \ x: y\mapsto [x,y]$, $x,y\in L$ is a super-derivation:
\begin{equation*} 
[x,[y,z]]=[[x,y],z]+(-1)^{|x|\cdot |y|} [y,[x,z]]. 
\end{equation*}
Substituting $z:=y$ which is odd we get the identity
\begin{equation}\label{odd} 
[x,[y,y]]=2 [[x,y],y],\quad y \ \text{odd}.  
\end{equation}

Denote by $U(L)$ the universal enveloping algebra of a Lie superalgebra $L$.  This is an associative algebra containing $L$, and we have
\begin{equation} \label{super-bracket}  [x,y] = x y - (-1)^{|x| | y |} yx \hbox{ for } x,y \in L_0 \cup L_1. \end{equation}
A PBW-basis of  $U(L)$ is described in~\cite{BMPZ}.

A homomorphism of Lie superalgebras is a linear map that preserves the $\mathbb{Z}_2$-grading and commutes with the superbracket.
{\it All ideals and subalgebras considered are always $\mathbb Z_2$-homogeneous.
If not otherwise stated  the elements and monomials of superalgebras that we consider are also assumed $\mathbb Z_2$-homogeneous.} 

In order to describe a free Lie superalgebra we consider disjoint sets $X_0$ and $X_1$ and the free associative algebra  $K\langle X_0 \cup X_1 \rangle$.
Let $X_0$ be even and $X_1$ odd, we extend these notions on monomials  on the alphabet $X_0 \cup X_1$.
Thus, the free associative algebra  $K\langle X_0 \cup X_1 \rangle$ is $\mathbb Z_2$-graded.
Consider $F$ the smallest $K$-linear subspace of  $K\langle X_0 \cup X_1 \rangle$ that contains $X_0 \cup X_1$ and
that is closed under the  multilinear superbracket $[ - , - ]$ defined on monomials by (\ref{super-bracket}).
Then  $F=F(X_0,X_1)$ is the free  Lie superalgebra
with  the free generating set $X_0 \cup X_1$ in the following sense.
 Let $L$ be a  Lie superalgebra with a  homogeneous generating set $\bar X_0 \cup \bar X_1$,
 where there are surjective maps $X_0 \to \bar  X_0$ and $X_1 \to \bar X_1$.
Then there is a unique epimorphism of Lie superalgebras $f : F \to L$
that maps elements $X_0\cup X_1$ to the respective elements of $\bar X_0 \cup \bar X_1$.
One also has  $ K\langle X_0 \cup X_1 \rangle \cong U(F(X_0,X_1))$. For more details on Lie superalgebras
see~\cite{Ba,BMPZ,Kac77,Scheunert}.

Let $I$ be $Ker(f)$  where $f$ is the map defined above.
Since $f$ is a homomorphism of Lie superalgebras then $I$ is a homogeneous ideal
 i.e.  $I = I_0 \oplus I_1$, where $I_0 = F_0 \cap I, I_1 = F_1 \cap I$.
We say that $R_0 \cup R_1$ generates $I$ if $R_0 \subseteq I_0, R_1 \subseteq I_1$ and $I$ is the smallest ideal of $F$ that contains $R_0 \cup R_1$.
We write
$$L = \langle X \ | \ R \rangle, \hbox{ where } X = X_0 \cup X_1, R = R_0 \cup R_1.$$
We say that $L$ is {\it finitely presented} (in terms of generators and relations)  if we can choose $X_0 \cup X_1 \cup R_0 \cup R_1$  finite.

%***************************************************************************************************************
\section{Some definitions and results on Gr\"obner-Shirshov basis for Lie superalgebras}

 Here we recall some definitions following \cite{L-P-Z}, note that \cite{L-P-Z} 
 itself follows \cite{B-K-L-M}  and that it originates from \cite{M89,BMPZ}.
 Let $T = T_0 \cup T_1$ be a disjoint union, $T^*$ the semigroup of associative words on $T$, 
 $T^{\#}$ the  non-associative words on $T$. A word is even if it contains even number of elements of $T_1$, otherwise it is odd.

 Consider a total (linear) order $< $ in $T$. It extends to the lexicographic order $< $ in $T^*$ i.e. for $u \in T^* \setminus \{ 1 \}$ we have $u < 1$ and $x_i v < x_j w$ if $x_i < x_j$ or $x_i = x_j$ and $v < w$. Denote by $\ll $ the length-lexicographic order on $T^*$ i.e. $u \ll v$ if either $u$ has smaller length than $v$ or both $u$ and $v$ have the same length and $u < v$.
 We draw attention that both signs $<$, $\ll $ do not include equality.

For $u, v \in T^{\#}$ we define $u < v$ (resp. $u \ll  v$) if the same holds for the  associative words obtained from $u,v$ after forgetting brackets.

A Lyndon-Shirshov word $w$ is a  non-empty word in $T^*$ that is bigger than every cyclic permutation i.e. 
for any proper decomposition $w=uv$, where $u,v\in T^*$, one has $w=uv > vu$.
A super-Lyndon-Shirshov word is either a Lyndon-Shirshov word or a word $uu$, where $u$ is 
 an odd Lyndon-Shirshov word. 

 A Lyndon-Shirshov monomial is a non-empty word $u$ in $T^{\#}$ such that
 either $u \in T$ or $u = u_1 u_2$ such that $u_1 > u_2$, $u_1$, $u_2$ are Lyndon-Shirshov monomials and if $u_1 = v_1 v_2$ then $v_2 \leq u_2$. A super-Lyndon-Shirshov monomial is either a Lyndon-Shirshov monomial or a non-associative word $uu$, where $u$ is  an odd Lyndon-Shirshov monomial. By \cite{BMPZ}  there is a bijection between the super-Lyndon-Shirshov monomials and super-Lyndon-Shirshov words, that sends each non-associative word $u$ to the associative word $\rho(u)$ obtained by forgetting the brackets.

 Denote by $A\langle T \rangle $ the free associative algebra generated by $T$. 
 It is considered as a Lie superalgebra by $[x,y] = xy - (-1)^{|x| |y|} yx$ for homogeneous elements $x,y \in A \langle T \rangle$ and denoted by $A \langle T \rangle ^{(-)}$. We write $L\langle T \rangle $ for 
 the subalgebra of $A \langle T \rangle ^{(-)}$  generated by $T$ with respect to the bracket above.

 \begin{theorem}[\cite{BMPZ}]  \label{free} The set of super-Lyndon-Shirshov monomials forms a linear basis of $L\langle T \rangle $,
 which is isomorphic to the free Lie superalgebra generated by the set $T$.
 \end{theorem}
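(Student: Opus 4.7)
My plan is to split the proof into two parts: first exhibit the super-Lyndon-Shirshov monomials as a linear basis of $L\langle T\rangle$, and then use that basis to deduce that $L\langle T\rangle$ has the universal property of the free Lie superalgebra on $T$.

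For the basis claim I would prove spanning and linear independence separately. For spanning, proceed by induction on the length of the underlying associative word and, within each length, by the length-lexicographic order $\ll$. Given any Lie monomial in $L\langle T\rangle$, apply super-anticommutativity and the super Jacobi identity, together with \eqref{odd} whenever a square $[y,y]$ with $y$ odd appears, to rewrite non-super-Lyndon-Shirshov bracketings. Each such rewriting step either strictly decreases the $\ll$-rank of the leading associative word or replaces the monomial by an already super-Lyndon-Shirshov monomial of the same length, so the procedure terminates at a linear combination of super-Lyndon-Shirshov monomials. For linear independence, embed $L\langle T\rangle \hookrightarrow A\langle T\rangle$ and expand each super-Lyndon-Shirshov monomial $w$ by removing brackets. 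The crucial combinatorial lemma is that the leading term of this expansion in $\ll$ is exactly $\rho(w)$, with coefficient $\pm 1$, and all remaining terms are strictly $\ll$-smaller associative words. Since $w \mapsto \rho(w)$ is a bijection between super-Lyndon-Shirshov monomials and super-Lyndon-Shirshov words, and distinct associative words are linearly independent in $A\langle T\rangle$, a triangular argument proves linear independence of the super-Lyndon-Shirshov monomials in $A\langle T\rangle^{(-)}$, hence in $L\langle T\rangle$.

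For freeness, let $L'$ be any Lie superalgebra and $\varphi : T \to L'$ a $\BZ_2$-graded set map. The universal property of the free associative algebra produces a unique homomorphism $A\langle T\rangle \to U(L')$ extending $\varphi$; its restriction to $L\langle T\rangle \subseteq A\langle T\rangle^{(-)}$ is a Lie superalgebra homomorphism whose image lies in $L' \subseteq U(L')$, because $L\langle T\rangle$ is generated by $T$ as a Lie superalgebra and $T$ maps into $L'$. Uniqueness is immediate. The main obstacle is the spanning step: one must organize the super-Jacobi rewriting so that it terminates, and the squares $[u,u]$ with $u$ odd Lyndon-Shirshov must be handled via \eqref{odd}. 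These squares are irreducible with respect to the rewriting, which is precisely why they must be included in the basis and why the theorem genuinely departs from the classical Lie algebra case; this is also where the hypothesis $\mathrm{char}(K)\neq 2,3$ is essential, since \eqref{odd} and the Jacobi identity can fail to fix the basis in low characteristic.
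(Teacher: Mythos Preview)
The paper does not prove this theorem; it is quoted from~\cite{BMPZ} without argument, so there is no ``paper's own proof'' to compare against. Your outline is the standard route and is essentially what is done in~\cite{BMPZ}, but two points need correction.

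First, in your linear-independence step you assert that the leading coefficient of a super-Lyndon-Shirshov monomial, when expanded in $A\langle T\rangle$, is $\pm 1$. This is false in the genuinely super case: if $w=[u,u]$ with $u$ an odd Lyndon-Shirshov monomial, then $[u,u]=uu-(-1)^{|u|^2}uu=2uu$, so the leading coefficient is $2$. The correct statement (and the one the paper uses just below this theorem) is that the leading coefficient lies in $\{1,2\}$. The triangular argument still goes through because $\mathrm{char}(K)\neq 2$, but you should state it correctly.

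Second, your freeness argument has a gap. You extend $\varphi:T\to L'$ to an associative homomorphism $A\langle T\rangle\to U(L')$ and then restrict to $L\langle T\rangle$, claiming the image lies in $L'\subseteq U(L')$. But this presupposes that the canonical map $L'\to U(L')$ is injective, i.e.\ PBW for an arbitrary Lie superalgebra $L'$, which is a nontrivial input you have not cited. A cleaner route, once you have the basis, is to define the abstract free Lie superalgebra $F(T)$ by generators and relations, observe that the super-Lyndon-Shirshov monomials span $F(T)$ by the same rewriting argument, and then note that the natural surjection $F(T)\to L\langle T\rangle$ sends these spanning elements to a linearly independent set, hence is an isomorphism. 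This avoids invoking PBW for $L'$.
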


 For  $p \in A \langle T \rangle \setminus \{ 0 \}$  we write $p$ as a linear combination of associative words i.e. elements of $T^*$ and write $\overline{p}$ for the higher associative word with respect to $\ll $ that appears in $p$. If the coefficient of $\overline{p}$ is 1 we call $p$ monic, if furthermore $p \in L \langle T \rangle \setminus \{ 0 \}$ we call it  a monic polynomial.

 For a word $w$ in $T^*$ we write $(w) \in T^{\#}$ such that $\rho ( (w)) = w$ i.e. $(w)$ is a non-associative word that after removing the brackets is $w$.  
 We call  $(w)$ an arrangement of brackets on $w$.

 \begin{lemma} (\cite[Lemma 2.3]{B-K-L-M}, \cite[Lemma 3.3.2]{BMPZ})
 Let $u, v$ be super-Lyndon-Shirshov words such that $v$ is a subword of $u$ i.e. $u = a v b$, where $a,b \in T^*$. 
 Then there is an arrangement of brackets 
  $(a[v]b)$ on $u$  such that  $[v]$  is a super-Lyndon-Shirshov monomial  that is 
  the unique arrangement of brackets of $v$   and such that $\overline{(a[v]b)} = u$ and the leading coefficient is 1 or 2.
 \end{lemma}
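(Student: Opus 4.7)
The plan is to prove the lemma by induction on the length of $u$, with case analysis driven by the canonical decomposition of $u$ as a super-Lyndon-Shirshov word.

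For the base case $|u|=|v|$ one has $u=v$, and $[v]$ is taken to be the unique super-Lyndon-Shirshov monomial arrangement of $v$ provided by the bijection $\rho$ recalled just before the lemma. If $v$ is a Lyndon-Shirshov word, a standard induction on $|v|$ using the canonical factorization $v=v_1 v_2$, the expansion
$$ [[v_1],[v_2]]=[v_1][v_2]-(-1)^{|v_1||v_2|}[v_2][v_1], $$
and the Lyndon-Shirshov inequality $v_1 v_2 > v_2 v_1$ yields $\overline{[v]}=v$ with leading coefficient $1$. If instead $v=ww$ with $w$ an odd Lyndon-Shirshov word, then $[v]=[[w],[w]]$ expands in $A\langle T\rangle$ as $[w][w]-(-1)^{1}[w][w]=2[w][w]$, so $\overline{[v]}=ww=v$ with leading coefficient $2$.

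For the inductive step with $|u|>|v|$, consider the canonical decomposition $u=u_1 u_2$: the Lyndon-Shirshov factorization if $u$ is a Lyndon-Shirshov word, or $u_1=u_2=w_0$ if $u=w_0 w_0$ is squared with $w_0$ an odd Lyndon-Shirshov word. The subword $v$ either sits entirely inside $u_1$, entirely inside $u_2$, or straddles the boundary. In the first two cases one applies induction to produce a bracketing $p$ of $u_i$ with $\overline{p}=u_i$ and the appropriate leading coefficient, and then super-brackets $p$ against the canonical arrangement $[u_j]$ of the other factor. A direct computation shows that the leading associative word of $[p,[u_j]]$ (or $[[u_j],p]$, chosen according to lexicographic order) equals $u_1 u_2=u$, with leading coefficient inherited from $p$; in the squared case the additional factor of $2$ arises once, through the identity $[x,x]=2x^2$ for odd $x$.

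The main obstacle is the straddling case, where $v$ overlaps both $u_1$ and $u_2$. Here one exploits that $u_2$ is the \emph{longest} proper Lyndon-Shirshov suffix of $u$ to refine the decomposition of $u$ further until $v$ is isolated inside a single factor of a refined super-Lyndon-Shirshov decomposition. Whenever the refinement produces a squared odd Lyndon-Shirshov pair one invokes identity~(\ref{odd}) to commute brackets without introducing new higher terms in $A\langle T\rangle$. Careful bookkeeping then shows that the final leading coefficient is exactly $1$ or $2$, matching the dichotomy inherited from the base case.
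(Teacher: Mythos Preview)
The paper does not supply its own proof of this lemma; it is quoted verbatim from \cite[Lemma~2.3]{B-K-L-M} and \cite[Lemma~3.3.2]{BMPZ}, so there is no in-paper argument to compare against. What follows is an assessment of your sketch on its own merits.

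Your base case and the two ``non-straddling'' subcases of the induction are fine and match the standard argument. The genuine gap is the straddling case. The sentence ``refine the decomposition of $u$ further until $v$ is isolated inside a single factor of a refined super-Lyndon-Shirshov decomposition'' is an assertion, not a construction: you do not say what the refinement is, why it terminates, or why $v$ eventually becomes a whole factor rather than continuing to straddle at every stage. Concretely, take $T=\{c>b>a\}$, $u=cba$, $v=cb$. The standard factorization is $u_1=c$, $u_2=ba$, and $v$ straddles. Refining $u_2=ba$ into $b\cdot a$ does not help: $v=cb$ still overlaps $u_1$ and the first piece of $u_2$. No amount of refining \emph{the factors} will ever make $cb$ a single factor, because the standard Lyndon-Shirshov monomial on $cba$ is $[[c,b],a]$, and the sub-bracket you want is already there---but your stated mechanism does not find it.

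The construction in the cited references is different in kind. One shows that in the standard bracketing $[u]$ the first letter of the occurrence of $v$ is the leftmost leaf of a sub-bracket of the form $[vc]$ for some (possibly empty) suffix $c$, and that $c$ factors as $c_1c_2\cdots c_m$ with each $c_i$ a Lyndon-Shirshov word and $v\ge c_1\ge\cdots\ge c_m$. One then \emph{replaces} the sub-bracket $[vc]$ by the left-normed bracketing $[\ldots[[[v],c_1],c_2],\ldots,c_m]$ and checks that the leading word is unchanged. In the super case the coefficient $2$ arises precisely when $v=ww$ with $w$ odd. This is the missing idea; without it the straddling case is not handled, and the appeal to identity~\eqref{odd} and ``careful bookkeeping'' does not substitute for it.
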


\begin{remark} The monograph~\cite{BMPZ} used notation $[a[v]b]$.
Now we keep notations of Bokut et.al.~\cite{B-K-L-M} (see also~\cite{L-P-Z}) because we use facts proven there.
Also the external parentheses  reflect the fact that the whole arrangement of brackets can be different from the unique arrangement of brackets on $u$ denoted as $[u]$.    
\end{remark}

Denote $[u]_v := \frac{1}{\alpha}  (a [v] b)  $, 
where $\alpha$ is the leading coefficient of   $(a [v] b)$. If $p \in L \langle T \rangle $ is  a monic polynomial with leading monomial 
$\overline{p}$ that is a subword of $u$ we write $[u]_p$ for the element of $L \langle T \rangle$ obtained from $[u]_{\overline {p}}$ by  substituting 
{ $[\overline{p}]$} with $p$.

Let $p,q \in L \langle T \rangle$ be monic polynomials with leading monomials $\overline{p}$ and $\overline{q}$. We define the compositions related to $p$ and $q$  as follows:

1) if $\overline{p} a = b \overline{q} = w$ where $a,b,w \in T^*$, and the length of $\overline{p}$ is bigger than the length of $b$, then
$$\langle p, q \rangle_w := [w]_p - [w]_q$$

2) if  $\overline{p} = b \overline{q} a= w$  where $a,b \in T^*$, then
$$\langle p, q \rangle_w := p - [w]_q$$

 We say that a subset $S$ of monic polynomials of  $L \langle T \rangle$ is closed under Lie composition if for every $p,q \in S$  
 $$\langle p, q \rangle_w = \sum_i \alpha_i  (a_i [s_i] b_i)$$
with $ \overline{(a_i [ s_i] b_i)} \ll  w$, $s_i \in S$,  { $a_i,b_i\in T^*$}, $\alpha_i \in K$ for every $i$, 
where  $(a_i [s_i] b_i )$ is an arrangement of brackets 
that continues the standard arrangement of brackets  for all $[s_i]=s_i\in S$.
In other words, the right hand side belongs to the Lie ideal generated  by $S$, where the expression goes over monomials
such that expansions of the latter consist of  $\ll$-smaller words.
Alternatively, one says that the above composition is trivial~\cite{L-P-Z}.

Let $I$ be an ideal of  $L \langle T \rangle$ generated by $S$. We say that $S$ is a Gr\"obner-Shirshov basis for $I$ if $S$ is closed under Lie composition,  alternatively we say $S$ is a Gr\"obner-Shirshov basis for $L \langle T \rangle/ I$.  
An associative word $u$ is called $S$-reduced if $ u \not= a\overline{s} b$ for $s \in S$, $a,b \in T^*$.
A non-associative monomial $w$ is  called $S$-reduced if the word $\rho(w)$ obtained by lifting the brackets is $S$-reduced. 

\begin{theorem}[\cite{B-K-L-M}] \label{linear-basis} Let $S$ be a set of monic polynomials in $L \langle T \rangle$. 
Then $S$ is a Gr\"obner-Shirshov basis for  $L = L \langle T \rangle/ I$ if and only if
the set of $S$-reduced super-Lyndon-Shirshov monomials is a linear basis for the Lie superalgebra  $L = L \langle T \rangle / I$.
\end{theorem}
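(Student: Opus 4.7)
The plan is to prove the two implications separately via the standard Composition--Diamond method. The length-lexicographic order $\ll$ is a well-ordering on $T^*$, so one can induct on leading words, and Theorem~\ref{free} already gives that super-Lyndon-Shirshov monomials span $L\langle T\rangle$, hence span $L\langle T\rangle/I$ modulo $I$. The technical engine is the operation $p \mapsto [u]_p$: when $[u]$ is a super-Lyndon-Shirshov monomial with $u = a\bar{s}b$ for $s \in S$, the bracket-rearrangement lemma recalled before the statement yields
\begin{equation*}
[u]_s = [u] + \sum_j \gamma_j [v_j],
\end{equation*}
where each $v_j$ is a super-Lyndon-Shirshov word with $v_j \ll u$, and $[u]_s$ lies in $I$. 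Thus every non-$S$-reduced super-Lyndon-Shirshov monomial is congruent modulo $I$ to a combination of strictly smaller super-Lyndon-Shirshov monomials, and iteration gives spanning by $S$-reduced ones.

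For the \emph{only if} direction, assume $S$ is closed under Lie composition. Spanning follows from the preceding paragraph. For linear independence, suppose $f = \sum_i \alpha_i [u_i]$ lies in $I$ with the $u_i$ pairwise distinct and $S$-reduced. Since $I$ is the Lie ideal generated by $S$, one has an expression $f = \sum_j \beta_j (a_j[s_j]b_j)$ with $s_j \in S$. Let $w$ be the $\ll$-maximum of the $\overline{(a_j[s_j]b_j)}$. If no cancellation occurs at the top, $\bar f = w = a_j\bar{s_j}b_j$ is not $S$-reduced, contradicting $S$-reducedness of each $u_i$. If the tops cancel, two indices $j,j'$ with $\overline{(a_j[s_j]b_j)} = \overline{(a_{j'}[s_{j'}]b_{j'})} = w$ give rise to a composition $\langle s_j,s_{j'}\rangle_w$, and closure under composition rewrites it as a combination of bracketings with leading word $\ll w$. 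Substituting into the expression for $f$ lowers the maximum leading word; well-ordering forces the procedure to terminate, ultimately reducing to the no-cancellation case, which then forces all $\alpha_i = 0$.

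For the \emph{if} direction, assume the $S$-reduced super-Lyndon-Shirshov monomials form a basis of $L\langle T\rangle/I$. Given $p,q \in S$, the composition $h := \langle p,q\rangle_w$ lies in $I$ with $\bar h \ll w$. Expand $h$ in the super-Lyndon-Shirshov basis of $L\langle T\rangle$; since $h \equiv 0$ modulo $I$ and the $S$-reduced super-Lyndon-Shirshov monomials are linearly independent modulo $I$ by hypothesis, at least one super-Lyndon-Shirshov word $v$ appearing in $h$ with nonzero coefficient is non-$S$-reduced. Choose the $\ll$-largest such $v$, write $v = a\bar{s}b$ for some $s \in S$, and subtract a suitable scalar multiple of $(a[s]b)$; by the bracket-rearrangement lemma the result $h'$ lies in $I$, is altered only in super-Lyndon-Shirshov words $\ll v$, and its largest non-$S$-reduced support is strictly smaller than $v$. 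Iterating and invoking well-ordering of $\ll$ produces the desired expression $\langle p,q\rangle_w = \sum_i \alpha_i(a_i[s_i]b_i)$ with every $\overline{(a_i[s_i]b_i)} \ll w$.

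The main obstacle is the bookkeeping in the only-if direction: one must control what happens when each composition $\langle s_j,s_{j'}\rangle_w$ is substituted back into $f$, verifying that the maximum leading word genuinely decreases and that no new top terms at level $w$ are introduced. This relies on the explicit coefficient $\alpha \in \{1,2\}$ in the bracket-rearrangement lemma and, in the authentic super setting, on careful handling of square monomials $uu$ with $u$ odd, where identity~\eqref{odd} governs the passage between associative and Lie expressions.
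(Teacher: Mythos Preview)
The paper does not prove this theorem at all: it is stated with the citation \cite{B-K-L-M} and used as a black box, so there is no ``paper's own proof'' to compare against. Your sketch is essentially the standard Composition--Diamond argument that one finds in \cite{B-K-L-M} and \cite{BMPZ}.

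That said, one step in your \emph{only if} direction is glossed over in a way that would need real work to make rigorous. When the tops of two summands $(a_j[s_j]b_j)$ and $(a_{j'}[s_{j'}]b_{j'})$ coincide at $w$, you assert that this ``gives rise to a composition $\langle s_j,s_{j'}\rangle_w$''. But the subwords $\overline{s_j}$ and $\overline{s_{j'}}$ inside $w$ need not overlap or nest; they can be disjoint, in which case no composition (of type 1 or 2) is defined. In the associative setting this disjoint case is handled by a separate, easy calculation showing that the difference can still be rewritten with strictly smaller leading word, but in the Lie superalgebra setting the analogous step requires the bracket-rearrangement lemma in a more delicate form (one must control what happens when a super-Lyndon-Shirshov word carries two disjoint distinguished subwords). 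This is exactly why \cite{B-K-L-M} routes the argument through Theorems~\ref{asss} and~\ref{asss2}: one passes to the enveloping algebra, where the associative Composition--Diamond machinery handles the disjoint case cleanly, and then pulls the result back to the Lie side. Your direct Lie argument is not wrong in spirit, but as written it has a gap at precisely this point.
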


Let $J$ be an ideal of $A \langle T \rangle$ generated by  $S$. By definition $S$ is a  Gr\"obner-Shirshov basis for the ideal $J$ (sometimes we say for $A \langle T \rangle/ J$) if and only if $S$ is closed under associative composition (see \cite{B-K-L-M} for the definition of the associative composition).

\begin{theorem} \cite[Thm. 2.8]{B-K-L-M}   \label{asss}  Let $S$ be a set of monic polynomials in $L \langle T \rangle$. Then $S$ is a Gr\"obner-Shirshov basis for the Lie superalgebras $L = L \langle T \rangle/ I$ if and only if
$S$ is a Gr\"obner-Shirshov basis for its universal enveloping algebra $U(L) = A \langle T \rangle / J$. That is, $S$ is closed  under Lie composition if and only if it is closed under associative composition.
\end{theorem}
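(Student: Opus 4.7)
The strategy is to pass between linear bases of the Lie superalgebra $L=L\langle T\rangle/I$ and its universal enveloping algebra $U(L)=A\langle T\rangle/J$ via the super Poincar\'e--Birkhoff--Witt theorem, and to apply Theorem~\ref{linear-basis} on the Lie side against the associative composition-diamond lemma on the associative side. Under the identification $A\langle T\rangle\cong U(L\langle T\rangle)$ together with the superbracket formula \eqref{super-bracket}, the associative two-sided ideal $J$ generated by $S$ in $A\langle T\rangle$ is exactly the two-sided ideal generated in $U(L\langle T\rangle)$ by the Lie ideal $I$, so that $U(L\langle T\rangle/I)\cong A\langle T\rangle/J$. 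Hence the task reduces to comparing bases of $L$ and of $U(L)$.

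For the forward direction I would assume that $S$ is closed under Lie composition. Theorem~\ref{linear-basis} then gives that the $S$-reduced super-Lyndon-Shirshov monomials form a linear basis of $L$. Applying super-PBW produces a basis of $U(L)$ consisting of non-increasing ordered products of these monomials (odd ones appearing at most once). Using the classical fact that every associative word in $T^*$ has a unique non-increasing factorization into super-Lyndon-Shirshov words, this basis of $U(L)$ is indexed by the set of associative words $w=w_1\cdots w_k\in T^*$ whose super-Lyndon-Shirshov factors $w_i$ are all $S$-reduced. The combinatorial fact that this coincides with the set of $S$-reduced associative words in $T^*$ then lets the associative composition-diamond lemma conclude that $S$ is closed under associative composition.

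The backward direction is analogous and slightly easier: closure under associative composition gives, by the associative composition-diamond lemma, that the $S$-reduced associative words form a basis of $A\langle T\rangle/J=U(L)$. Restricting via PBW to those basis elements that are single super-Lyndon-Shirshov monomials, and using that their image in $U(L)$ is still linearly independent, I would conclude that the $S$-reduced super-Lyndon-Shirshov monomials form a basis of $L$. Theorem~\ref{linear-basis} then yields closure under Lie composition, and one direction of Theorem~\ref{free} ensures that spanning in $L$ can be achieved by Lie-reducing an arbitrary super-Lyndon-Shirshov monomial using the bracketed compositions $[u]_s$ inductively on the leading word.

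The main obstacle is the rigidity claim used in the forward direction: if the leading word $\overline{s}$ of some $s\in S$ appears as a subword of $w=w_1\cdots w_k$ written in non-increasing super-Lyndon-Shirshov factorization, one must show that $\overline{s}$ is contained entirely in one factor $w_i$. This uses the characteristic combinatorial structure of Lyndon-Shirshov words (a Lyndon-Shirshov word cannot straddle a descent in a non-increasing concatenation of such words), and in the super setting it requires careful treatment of the exceptional words $uu$ with $u$ odd Lyndon-Shirshov, since their bracketing and multiplicity pattern differ from the purely Lie case. Once this rigidity is in place the two composition-diamond lemmas fit together cleanly through PBW.
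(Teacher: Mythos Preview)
The paper does not give its own proof of this statement: it is quoted as \cite[Thm.~2.8]{B-K-L-M} and used as a black box (notably in the proof of Theorem~\ref{basisHNN}). So there is no in-paper argument to compare your proposal against.

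That said, your outline is essentially the route taken in \cite{B-K-L-M}: pass through PBW and the unique non-increasing super-Lyndon--Shirshov factorization of associative words, and match the two composition--diamond lemmas (Theorem~\ref{linear-basis} and Theorem~\ref{asss2}) via the combinatorial rigidity you isolate. You have correctly identified the only nontrivial point, namely that a super-Lyndon--Shirshov word $\overline{s}$ occurring as a subword of a non-increasing concatenation $w_1\cdots w_k$ of super-Lyndon--Shirshov words must lie inside a single factor $w_i$; the exceptional words $uu$ with $u$ odd Lyndon--Shirshov require separate treatment both as possible $\overline{s}$ and as possible $w_i$. Two small points worth making explicit if you flesh this out: (i) in the forward direction, the PBW monomial $[w_1]\cdots[w_k]$ has leading associative word $w_1\cdots w_k$, and the transition matrix to the $S$-reduced words is unitriangular up to the powers of $2$ coming from the odd squares, so the index-set bijection really is a change of basis; (ii) in the backward direction, the spanning step you sketch (iterated reduction by $[u]_s$) terminates in $S$-reduced monomials irrespective of closure under composition, so Theorem~\ref{linear-basis} can indeed be invoked in the ``only if'' direction.
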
  

\begin{theorem} \cite{B-K-L-M} \label{asss2} Let $S$ be a set of monic elements of $A \langle T \rangle \setminus \{ 0 \}$. Then $S$ is a Gr\"obner-Shirshov basis for the associative algebra $A \langle T \rangle/ J$ if and only if the set of $S$-reduced words is a linear basis for $A \langle T \rangle/ J$.
\end{theorem}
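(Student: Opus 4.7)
The plan is to prove both directions of this Composition-Diamond lemma for associative algebras, noting that the harder direction is the ``if'' part, i.e.\ showing that closure under composition forces the $S$-reduced words to be linearly independent modulo $J$. Throughout, the well-order $\ll$ on $T^*$ drives the inductive arguments.

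\smallskip

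For the ``only if'' direction, suppose the $S$-reduced words form a linear basis of $A\langle T\rangle/J$. For any composition $\langle f,g\rangle_w$ with $f,g\in S$, the element lies in $J$ and has leading word strictly $\ll w$. Since it vanishes in $A\langle T\rangle/J$ and the $S$-reduced words form a basis, one rewrites it by repeatedly replacing any occurrence $a\overline{s}b$ ($s\in S$) with $-a(s-\overline{s})b$, each such step strictly decreasing the maximum word with respect to $\ll$. Collecting these rewrites gives an expression of the required form $\sum_i \alpha_i a_i s_i b_i$ with all $\overline{a_i\overline{s_i}b_i}\ll w$, which is precisely closure under associative composition. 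For spanning in the converse direction, the same rewriting procedure applied to any word $u=a\overline{s}b$ shows that $u$ is congruent modulo $J$ to a linear combination of $S$-reduced words, and termination is guaranteed because $\ll$ is a well-order.

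\smallskip

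The core of the ``if'' direction is the following key lemma: if $S$ is closed under associative composition, then every nonzero $p\in J$ admits a presentation $p=\sum_i \alpha_i a_i s_i b_i$ with $s_i\in S$, $a_i,b_i\in T^*$, $\alpha_i\in K$, and $\overline{a_i\overline{s_i}b_i}\leq \overline{p}$ for every $i$. Given any initial presentation of $p$, let $w$ be the $\ll$-maximum among the $\overline{a_i\overline{s_i}b_i}$. If $w>\overline{p}$, the contributions at level $w$ must cancel, and each cancellation corresponds to two terms $a_i\overline{s_i}b_i=a_j\overline{s_j}b_j=w$. These coincidences fall into precisely the two ambiguity patterns used to define the associative compositions: either the leading words $\overline{s_i}$, $\overline{s_j}$ overlap in $w$ (intersection case), or one contains the other (inclusion case). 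The composition closure hypothesis then replaces the top-level cancellation by terms whose maximum word is strictly $\ll w$, and I would iterate until the maximum equals $\overline{p}$, using well-foundedness of $\ll$ to guarantee termination. Once the key lemma is available, linear independence is immediate: any nonzero element of $J$ has leading word of the form $a\overline{s}b$, so no nontrivial combination of $S$-reduced words can lie in $J$.

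\smallskip

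The hardest step is the Diamond-lemma-style ambiguity resolution inside the key lemma: one must classify every equality $a_i\overline{s_i}b_i=a_j\overline{s_j}b_j$ into the intersection or inclusion pattern, verify that the difference of the two corresponding terms is expressible via a composition $\langle s_i,s_j\rangle_w$ flanked by appropriate words, and then lift the triviality of that composition back to the original sum so that the inductive hypothesis on $w$ actually applies. This bookkeeping, together with setting up an induction on $w$ that simultaneously controls the number of terms attaining the maximum, is where the entire argument is concentrated.
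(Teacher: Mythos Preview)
The paper does not prove this statement at all: it is quoted from \cite{B-K-L-M} and used purely as a black box, so there is no ``paper's own proof'' to compare against. What you have outlined is the standard Composition--Diamond (Bokut--Shirshov, Bergman) argument, and in broad strokes it is the correct one.

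There is, however, one genuine omission in your key lemma. You claim that every coincidence $a_i\overline{s_i}b_i=a_j\overline{s_j}b_j=w$ falls into ``precisely the two ambiguity patterns'' (intersection or inclusion). This misses the third, \emph{disjoint}, pattern in which $\overline{s_i}$ and $\overline{s_j}$ occupy non-overlapping segments of $w$, say $w=a\,\overline{s_i}\,c\,\overline{s_j}\,b$. That case is not governed by any composition $\langle s_i,s_j\rangle_{*}$ and must be resolved directly via the telescoping identity
\[
a\,s_i\,c\,\overline{s_j}\,b \;-\; a\,\overline{s_i}\,c\,s_j\,b
\;=\; a\,s_i\,c\,(\overline{s_j}-s_j)\,b \;+\; a\,(s_i-\overline{s_i})\,c\,s_j\,b,
\]
after which each summand on the right is already a sum of terms of the form $a's'b'$ with $a'\overline{s'}b'\ll w$. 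Without handling this third case your induction on $w$ does not close. Once you insert it, your sketch matches the classical proof in \cite{B-K-L-M}.
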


Let $u\in T^*$ be a super Lyndon-Shirshov word. A bracketing $(u)\in T^\#$ is called {\it admissible} if
$\overline{(u)}=u$, and  for the leading coefficient $\alpha$ we have that it is the same as the leading coefficient in the unique standard bracketing $[u]$ that gives the respective super Lyndon-Shirshov monomial, thus  $\alpha\in \{1,2\}$.
For example, consider a monomial 
\begin{equation}\label{utx}
u=t x_{i_1}\cdots x_{i_s},\qquad\text{where}\quad x_{i_1}\le x_{i_2}\le \cdots \le x_{i_s}<t, \quad x_j,t\in T, \ s\ge 0.
\end{equation} 
For our applications below, we assume additionally that any odd basis element $x_j$ appears in~\eqref{utx} at most once  (now this is not necessary).
One checks that $u$ is a Lyndon-Shirshov word. One has the unique standard bracketing on the respective Lyndon-Shirshov monomial
\begin{equation}\label{stand}
[u]=[t x_{i_1}x_{i_2}\cdots x_{i_s}]= [\ldots [[t,x_{i_1}],x_{i_2}],\ldots, x_{i_s}]. 
\end{equation}
Indeed, one checks that this bracketing yields a Lyndon-Shirshov monomial. For a general algorithm of a standard bracketing on a super Lyndon-Shirshov word see \cite{BMPZ}.

%Another approach is as follows. We start with the middle product and multiply either by a left or the right remaining neighbour letter at each step:
%\begin{equation}\label{utx2}
%u_1:=[t,x_{i_1}], \quad u_2:=\left\{ \begin{matrix} [t,u_1],\ \text{or} \\ [u_1,x_{i_2}], \end{matrix} \right.\ ,\ldots,\
%u_{i+1}:=\left\{ \begin{matrix} [t,u_i],\ \text{or} \\ [u_i,x_{i_j}], \end{matrix} \right.,\ldots  
%\end{equation}
%where $x_{i_j}$ is the first remaining $x$-letter. When we run out of letters, we get $(u)$. 

\begin{lemma} \label{Ladmiss}
Consider $u=t x_{i_1}\cdots x_{i_s}$~\eqref{utx} with bracketing~\eqref{stand}. Then 
\begin{enumerate} 
\item $\overline{[u]}=u$, i.e. we have an admissible bracketing of $u$.
\item The only word  having prefix $t$  in the expansion of $[u]$  as a linear combination of words is $u=t x_{i_1}\cdots x_{i_s}$.  
\end{enumerate}
\end{lemma}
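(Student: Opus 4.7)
The plan is to prove both claims by a simultaneous induction on $s$, exploiting the left-nested form of the standard bracketing~\eqref{stand} together with the hypothesis $x_{i_j} < t$ for all $j$. The base case $s=0$ is immediate: $[u]=t$ has the unique associative expansion $u=t$, which is both the leading word (with coefficient $1$) and the only word starting with $t$.

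For the inductive step, set $v := t x_{i_1}\cdots x_{i_{s-1}}$ with its standard bracketing $[v]$, so that $[u]=[[v],x_{i_s}]$. By the inductive hypothesis, the expansion of $[v]$ in the free associative algebra $K\langle T\rangle$ has leading word $v$ with coefficient $1$, and $v$ is the only word in this expansion beginning with $t$. Using the super-bracket formula~\eqref{super-bracket}, expand
\begin{equation*}
[u] \;=\; [v]\cdot x_{i_s} \;-\; (-1)^{|[v]|\,|x_{i_s}|}\, x_{i_s}\cdot [v].
\end{equation*}
Every associative word appearing in the first summand has the form $w\cdot x_{i_s}$ where $w$ appears in the expansion of $[v]$; every word appearing in the second summand begins with the letter $x_{i_s}$. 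Since $x_{i_s}<t$, no word from the second summand begins with $t$, and in the length-lexicographic order no word of length $s+1$ starting with $x_{i_s}$ can exceed $u = t x_{i_1}\cdots x_{i_s}$.

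Combining these observations yields both conclusions. For~(1), the leading word of $[u]$ must come from the first summand, and by the inductive hypothesis it equals $v\cdot x_{i_s}=u$, with leading coefficient $1$; in particular the bracketing is admissible. For~(2), any word in the expansion of $[u]$ with prefix $t$ must come from $[v]\cdot x_{i_s}$ (the second summand contributing only words starting with $x_{i_s}$), hence from a word of $[v]$ with prefix $t$; by the inductive hypothesis the unique such word is $v$, contributing $v\cdot x_{i_s}=u$.

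There is no real obstacle: the only thing to keep an eye on is the sign $(-1)^{|[v]|\,|x_{i_s}|}$ in the associative expansion of the bracket, but since the second summand contributes no word starting with $t$, it cannot cancel the contribution of $u$ from the first summand, regardless of that sign. In particular the restriction that each odd $x_j$ occurs at most once plays no role in the argument, in accordance with the parenthetical remark preceding the lemma.
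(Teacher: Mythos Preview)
Your proof is correct and follows exactly the approach indicated in the paper, which merely states that ``both claims are proved together by induction on number of letters in $u$'' without further detail. Your argument supplies precisely the details of that induction: using $[u]=[[v],x_{i_s}]=[v]\cdot x_{i_s}-(-1)^{|[v]||x_{i_s}|}x_{i_s}\cdot[v]$, observing that the second summand contributes only words beginning with $x_{i_s}<t$, and invoking the inductive hypothesis on $[v]$.
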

\begin{proof}
Both claims are proved together by induction on number of letters in $u$.
\end{proof}
Note we get many different admissible bracketings on the word $u$.
Below, by an admissible bracketing we often mean that for any super Lyndon-Shirshov word we fix an admissible bracketing on it.

We shall use the following version of the principal implication of Theorem~\ref{linear-basis},
which we did not find in the literature.

\begin{theorem}\label{linear-basis2}
Let $I$ be an ideal of the free Lie superalgebra $L\langle T\rangle$ generated by $S$, and $S$ a Gr\"obner-Shirshov basis for $I$.
Let us fix an admissible bracketing $(u)$ for all super Lyndon-Shirshov words $u\in T^*$.
Then a linear basis of $L\langle T\rangle/I$ is given by all monomials $(u)$ such that $u$ is a $S$-reduced super Lyndon-Shirshov word. 
\end{theorem}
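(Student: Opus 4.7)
The plan is to produce an invertible, triangular change of basis between the established basis $\{[u] : u \in \Omega\}$ supplied by Theorem~\ref{linear-basis} and the proposed set $\{(u) : u \in \Omega\}$, where $\Omega$ denotes the set of $S$-reduced super Lyndon-Shirshov words in $T^*$.

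First I would work inside $L\langle T\rangle$ itself and show that for every super Lyndon-Shirshov word $u$, the difference $(u) - [u]$ admits an expansion in the super Lyndon-Shirshov basis of Theorem~\ref{free} of the form $\sum_j \beta_j [v_j]$ with every $v_j \ll u$. The key input is admissibility: by hypothesis $(u)$ and $[u]$ share the same leading associative word $u$ and the same leading coefficient $\alpha \in \{1,2\}$, so $\overline{(u) - [u]} \ll u$ in $A\langle T\rangle$. To translate this into a statement about the $v_j$, I would first establish the auxiliary fact that the leading associative word of any nonzero Lie polynomial $f \in L\langle T\rangle$ coincides with the $\ll$-maximum of the super Lyndon-Shirshov words appearing in its unique super Lyndon-Shirshov expansion. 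This in turn follows because each $[v] = \gamma_v v + (\text{associative words strictly } \ll v)$ with $\gamma_v \in \{1,2\}$, so the maximal $v$ in the expansion cannot be cancelled by contributions coming from smaller super Lyndon-Shirshov monomials.

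Next I would pass modulo $I$. The standard Gr\"obner-Shirshov reduction that underlies Theorem~\ref{linear-basis} rewrites each $[v_j]$ modulo $I$ as a combination of basis monomials $[w]$ with $w \in \Omega$ and $w \le v_j$: if $v_j \notin \Omega$, write $v_j = a \overline{s} b$ for some $s \in S$, and subtract a suitable scalar multiple of $[v_j]_s \in I$ (which has leading word $v_j$ by admissibility of the bracketing $(a[\overline{s}]b)$) in order to cancel the leading term; iteration terminates because each step strictly lowers the leading word with respect to $\ll$. Combining this with the previous paragraph, for each $u \in \Omega$ we obtain
\[
(u) \equiv [u] + \sum_{w \in \Omega,\ w \ll u} c_w\,[w] \pmod{I}.
\]

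Listing $\Omega$ in increasing $\ll$-order, this identity exhibits the transition matrix from $\{[u] : u \in \Omega\}$ to $\{(u) : u \in \Omega\}$ in $L\langle T\rangle/I$ as triangular with $1$'s on the diagonal, hence invertible; therefore $\{(u) : u \in \Omega\}$ is also a linear basis of $L\langle T\rangle/I$. I expect the main technical obstacle to be the first step, specifically the systematic identification of the leading associative word of a Lie polynomial with the $\ll$-maximum of the super Lyndon-Shirshov words in its expansion; once this is in hand, the reduction step in the second paragraph is a routine application of the algorithm already implicit in the proof of Theorem~\ref{linear-basis}.
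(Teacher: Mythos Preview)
Your proposal is correct and follows essentially the same route as the paper's proof: expand $(u)-[u]$ in the super Lyndon-Shirshov basis of Theorem~\ref{free}, use the auxiliary leading-term fact to conclude that every term is $\ll u$, reduce the non-$S$-reduced summands via the Gr\"obner-Shirshov property, and finish by triangularity. The paper records your auxiliary fact simply as the observation $\overline{w}=\max\{u_i\}$ and writes out the inverse relation explicitly rather than appealing to a triangular transition matrix, but these are cosmetic differences only.
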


\begin{proof}
By Theorem~\ref{linear-basis}, $\{[u]\mid u\in T^* \text{ are $S$-reduced super Lyndon-Shirshov words} \}$ is a linear basis of $L\langle T\rangle/I$.
Let $u$ be an $S$-reduced super Lyndon-Shirshov word. By Theorem~\ref{free},
\begin{equation}\label{exp_w}
w:=(u)-[u]=\sum_{i=0}^m \lambda_i [u_i] \in L\langle T\rangle, \quad \lambda_i\in K,
\end{equation}
where $u_i$ are super Lyndon-Shirshov words.
By the definition of admissible bracketing, $\overline {w}\ll  u.$
Observe that $$\overline w=\max\{u_i\mid 0\le i\le m,\ \text{in terms of $\ll $}\},$$ without loss of generality assume that $\overline{w}=u_0$. 
Hence  by~\eqref{exp_w}  $$u_i\,\underline {\ll }\, u_0\ll  u.$$
In case $u_0$ is $S$-reduced, we leave it in~\eqref{exp_w} untouched.
Now consider that $u_0$ is $S$-reduced. We have  $u_0=a\bar{s}b$, $s\in S$, $a,b\in T^*$. 
We obtain
$$
w_1:=[u_0]-  (a[s]b) \in L\langle T\rangle,\qquad \overline{w_1}\ll  u_0.   
$$
We express $[u_0]$ from this formula and substitute into~\eqref{exp_w}.
Using induction with respect to $\ll $, we get
\begin{equation}\label{uus}
(u)=[u]+\sum_i\lambda_i [u_i]+\sum _j\mu_j   (a_j [s_j] b_j), \quad \lambda_i,\mu_j\in K,\quad u_i\ll u,\    \overline{(a_j [s_j] b_j)}    \ll  u,    
\end{equation}
where $u_i$ are $S$-reduced super Lyndon-Shirshov words. Consider the image of the relation above in $L\langle T\rangle/I$:
\begin{equation*}
(u)=[u]+\sum_i\lambda_i [u_i] \mod I, \quad \lambda_i\in K,\quad u_i\ll u,    
\end{equation*}
where $u_i$ are $S$-reduced super Lyndon-Shirshov words. Using triangular nature of this relation, we can reverse it
\begin{equation*}
[u]=(u)+\sum_i\lambda_i' (u_i)\mod I, \quad \lambda_i'\in K,\quad u_i\ll u. \qedhere    
\end{equation*}
\end{proof}

\begin{cor}
An admissible bracketing $(u)$ on all super Lyndon-Shirshov words $u\in T^*$  yields a linear basis of the free Lie superalgebra $L\langle T\rangle$. 
\end{cor}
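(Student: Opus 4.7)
The plan is to deduce this corollary as the degenerate case $I=0$, $S=\emptyset$ of Theorem~\ref{linear-basis2}. Indeed, taking $S=\emptyset$, the ideal of $L\langle T\rangle$ generated by $S$ is the zero ideal, so $L\langle T\rangle/I=L\langle T\rangle$. The hypothesis that $S$ be a Gr\"obner-Shirshov basis is satisfied vacuously, since there are no pairs $p,q\in S$ and hence no compositions to verify against the closure condition.

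Next, I would note that $S$-reducedness is also vacuous: an associative word $u$ is $S$-reduced when it cannot be written as $a\overline{s}b$ for any $s\in S$, and with $S=\emptyset$ every word in $T^*$ qualifies. Consequently, every super Lyndon-Shirshov word is $S$-reduced. Applying Theorem~\ref{linear-basis2} with this data then states exactly that for any fixed choice of admissible bracketing $(u)$ on each super Lyndon-Shirshov word $u\in T^*$, the collection $\{(u)\}$ is a linear basis for $L\langle T\rangle$.

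There is no substantive obstacle here; the only thing worth double-checking is that the conventions in the statement of Theorem~\ref{linear-basis2} accommodate the empty set of relations, i.e.\ that the proof given there does not tacitly assume $S\neq\emptyset$ (e.g.\ in the inductive rewriting step \eqref{uus}, the sum indexed by $j$ is simply empty when $S=\emptyset$, which is harmless). Once that is noted, the corollary follows immediately without further argument, and in particular recovers the fact (Theorem~\ref{free}) that the standard bracketings $[u]$ of super Lyndon-Shirshov words are a basis, now extended to arbitrary admissible bracketings.
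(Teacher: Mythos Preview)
Your proposal is correct and is exactly the intended argument: the paper gives no separate proof for this corollary, placing it immediately after Theorem~\ref{linear-basis2} as an obvious specialization, and the specialization you identify ($S=\emptyset$, $I=0$, all words vacuously $S$-reduced) is precisely that. Your observation that the rewriting step in~\eqref{uus} degenerates harmlessly (the $j$-sum is empty) is the only point worth mentioning, and you have it.
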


\section{ Gr\"obner-Shirshov basis of HNN extensions of Lie superalgebras }
Let $L$ be a Lie superalgebra   with a subalgebra $A$. Let $$d : A \to L$$ be a 
superderivation of degree $|d|$, i.e. a linear homogeneous map of parity $|d|\in \{0,1\}$  i.e.
$$ |d(a) | = | a | + | d |$$ for every homogeneous element $a$, and
such that for $a, b \in A$
$$d([a,b]) = [d(a), b] + (-1)^{|d| |a|} [a, d(b)].$$
We stress that in this setting $d$ need not be defined on the whole of $L$.

Let $a\in A$ be odd. Then 
\begin{equation}\label{der2}
d([a,a])=[d(a), a]+(-1)^{|d||a|}[a,d(a)]=2[d(a),a],\qquad a \text{ odd}.
\end{equation}

Recall that  by \cite{L-P-Z} a HNN extension of Lie superalgebra is  defined by the presentation in terms of generators and relations
 \begin{equation} \label{eq-HNN} H  :=  \langle L, t \ | \ [t,a] = d(a), a \in A \rangle \end{equation}  
where $| t | = | d |$ i.e. $t$ is even if $|d| = 0$ and $t$ is odd if $|d| = 1$.
 
 In the case of ordinary Lie algebras the HNN construction was considered 
  in \cite{W,L-S}. By \cite{L-P-Z}, $L$ embeds in $H$.
For $L$ a Lie superalgebra the case $A=L$ is not interesting because in this case we immediately get that $H$ is a semidirect product of $L$ with the one-dimensional Lie algebra $\langle t\rangle_K$ in case $t$ is even and two-dimensional Lie superalgebra $\langle t,[t,t]\rangle_K$ in  case $t$ is odd.
 Thus we shall always assume that $dim(L/A)\ge 1$.

 We explain in more details the above definition of HNN extension of Lie superalgebra. If $L$ has a presentation in terms of generators and relations
 $$L = \langle Y \ | \ S_0 \rangle$$
 then $H$ is  defined by a presentation in terms of generators and relations
 $$H := \langle Y \cup \{ t \} \ | \ S_0 \cup T \rangle$$
 where for every $l \in L$ we choose an element $u_l$ in the free Lie superalgebra with 
  free generating set $Y$, such that the image of $u_l$ in $L$ is $l$ and 
 $$T := \{ [t, u_a] - u_{d(a)}\  \mid\ a \in A \}.$$
 Note that in the above definition we  can substitute $a \in A$ with $a$  ranging through a fixed  homogeneous generating set of $A$ as a Lie superalgebra.

 Let $X$ be a homogeneous basis of $L$ as a linear space that contains a basis $B$ of $A$.
 We consider a linear order $< $ on $T = X \cup \{ t \}$ such that for $a \in B, x \in X \setminus B$ we have that
 $$a < x < t.$$

 Below we follow the following convention: 
 if we consider the elements of $X$ (its subset $B$ in particular) belonging to the basis of $L$
 we use dashed letters $x',a'$.
 But if we consider the elements of (a copy of) $X$ as free generators of the free Lie superalgebra  $L \langle X \rangle$,
 we use non-dashed letters $x,a$.
 Consider the structural constants of $L$ and the action of $d$ on it.
 \begin{equation}\label{constants}
 [x',y'] = \sum_{v \in X} \alpha_{xy}^v v', \qquad d(a') = \sum_{v \in X} \beta_a^v v', \qquad 
 \alpha_{xy}^v, \beta_a^v \in K,\quad  x,y \in X, a \in B.
 \end{equation}
 By~\eqref{odd}, we have $[x',[y',y']]= 2[[x',y'],y']$ for the basis elements $x',y'\in X$, where $y'$ is odd.
Then we get identities in terms of structural constants~\eqref{constants}
\begin{equation}\label{id_const}
\sum_{v\in X}\alpha_{yy}^v\alpha_{xv}^u=2 \sum_{v\in X} \alpha_{xy}^v\alpha_{vy}^u,\qquad
x,y,u\in X,\   y \hbox{ odd} .
\end{equation}
Similarly, equality  $[[x',x'],y']= 2[x',[x',y']]$, where $x',y'\in X$ and $x'$ is odd, implies
\begin{equation}\label{id_const2}
\sum_{v\in X}\alpha_{xx}^v\alpha_{vy}^u=2 \sum_{v\in X} \alpha_{xy}^v\alpha_{xv}^u,\qquad
x,y,u\in X,\  x \hbox{ odd} .
\end{equation}
Let $a'\in B$ be odd. By~\eqref{der2} we have $d([a',a'])=2[d(a'),a']$.
Again we get identities in terms of  structural constants~\eqref{constants}
\begin{equation}\label{id_const3}
\sum_{v\in B}\alpha_{aa}^v\beta_{v}^u =2 \sum_{v\in X} \beta_{a}^v\alpha_{va}^u,\qquad
a\in B,u\in X.
\end{equation}

\noindent
 Now we define the elements  $f_{xy}, f_{xx}, g_a$ in the free Lie superalgebra  $L\langle X \cup \{ t \} \rangle$.
 Set for $x,y \in X, a \in B$
 \begin{align*}
 f_{xy} &  := [x,y] - \sum_{v \in X} \alpha_{xy}^v v,\qquad x \not= y, \\ 
 f_{xx} &  :=  [x,x] - \sum_{v \in X} \alpha_{xx}^v v,\qquad x\  \text{odd},\\ 
 g_a    &  := [t,a] - \sum_{v \in X} \beta_a^v v. 
 \end{align*}
In case $x>y$ we can take relations $f_{xy}$ only, because $f_{yx}=-(-1)^{|x||y|} f_{xy}$.
Consider
\begin{equation}\label{setS}
 S := \{ f_{xy} \ | \ x,y \in X, x > y \} \cup \{ f_{xx} \ | \ x \in X_1: = X \cap L_{1} \} \cup \{ g_a \ | \ a \in B \}.
\end{equation} 
Then $H$ has a presentation in terms of generators and relations (in the category of Lie superalgebras)
\begin{equation}\label{HNN}
 H = \langle X \cup \{ t \} \ | \ S \rangle.
\end{equation}

\noindent
 Every one of the elements $f_{xy}, f_{xx}, g_a$ is an element of the free Lie superalgebra  that is inside the free 
 associative algebra $A \langle X \cup \{ t \} \rangle$. We consider the
 length-lexicographic order $\ll $ in the free monoid with free  generating set $X \cup \{ t \}$. 
 For a non-zero element $f$  of  $A \langle X \cup \{ t \} \rangle$ we denote by $\widehat{f}$  the leading word together with its coefficient and $\overline{f}$ the leading  word without its coefficient. Thus
 $$\overline{f_{xx}} = xx,\qquad \widehat{f_{xx}} = 2xx,\quad \hbox{ for } x \in X_1 = X \cap L_1,$$
 $$\overline{f_{xy}} = xy = \widehat{f_{xy}}\ \hbox{ for } x,y \in X, x > y; \qquad$$
 $$ \overline{g_a} = ta = \widehat{g_a}\ \hbox{ for } a \in B.$$

There are 5 possible compositions to check.   The following calculations are made about Lie compositions for $x > y > z$ where $x,y,z \in X$ and $ a > b$ for $a,b \in B$. 
The first two are similar to the ones that appear in the case of ordinary Lie algebras  considered by Wasserman in \cite{W}, see detailed computations in~\cite{L-P-Z}:

  1) $$\langle f_{xy}, f_{yz} \rangle_{xyz} = [f_{xy},z] - [x, f_{yz}] =$$ $$ - (-1)^{|x| |y|} [y, f_{xz}]  - (-1)^{|x| |y|}  \sum_{v \in X} \alpha_{xz}^v f_{yv} - \sum_{v \in X} \alpha_{xy}^v f_{vz} + \sum_{v \in X} \alpha_{yz}^v f_{xv}$$
  
  2) $$\langle g_a, f_{ab} \rangle_{tab} = [g_a, b] - [t, f_{ab}] =$$ $$ - (-1)^{|t| |a|} [a, g_b]   - (-1)^{|t| |a|} \sum_{v \in X} \beta_b^v f_{av} - \sum_{v \in X} \beta_a^v f_{vb} + \sum_{v \in B} \alpha_{ab}^v  g_v
  $$

 As pointed out in \cite{L-P-Z} 1) and 2) are trivial compositions since  $\overline{[y, f_{xz}]} = {xzy} < xyz$ and  $\overline{ [a, g_b]} = tba < t a b$ and the length of all $\overline{f_{**}}$ is always 2 hence smaller than $xyz$ and $tab$, respectively.

  In \cite{L-P-Z} further 3 compositions are considered.

\noindent
3) Suppose $y \in X_1$, then
 $$\langle f_{xy}, f_{yy}\rangle_{xyy} = [f_{xy},y] - \frac{1}{2} [x,  f_{yy}]    $$ 
4) Suppose $x \in X_1$, then  $$\langle f_{xx}, f_{xy} \rangle_{xxy} =  \frac{1}{2}[f_{xx},y] - [x, f_{xy}]$$
5) Suppose $a \in B_1 = B \cap L_1 $, then $$\langle g_a, f_{aa} \rangle_{taa} =[g_a, a]- \frac{1}{2}[t, f_{aa}].$$

 These are the five compositions that should be trivial in order $S$ to be a Gr\"obner-Shirshov basis. In  \cite{L-P-Z} the authors say that a Lie composition $\langle p, q \rangle_w$ is trivial for $p,q \in S$ if it satisfies the definition of super Lie $S$-closeness i.e.
  $\langle p, q \rangle_w$ can be expressed as a linear combination of elements  of the type $v_i = [ x_{i_1}, ..., x_{i_k}, s_i, x_{i_{k+1}}, ..., x_{i_m}]$ with some position of the superbracket for $s_i \in S, x_{i_1}, \ldots, x_{i_m} \in X \cup \{ t \}$ and  for the leading monomial  $w_i$ of $v_i$ we have $w_i < < w$.
 
In \cite{L-P-Z} the authors claim that cases 3), 4), 5) are not trivial compositions but we prove the contrary in the following result by showing decompositions of $\langle p, q \rangle_w$ as linear combination of monomials that differ from the ones in \cite{L-P-Z}.

\begin{prop}\label{Prop345}
 Even in the cases 3), 4), 5) we have trivial compositions.
\end{prop}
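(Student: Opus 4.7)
The plan is to verify each of the three compositions by direct expansion, showing that after applying the super-Jacobi identity inside the free Lie superalgebra, the remaining terms split into (i) scalar multiples of elements of $S$ with leading word of length $2$ and (ii) a purely linear combination of elements of $X$ whose coefficient vector vanishes identically by the structural identities \eqref{id_const}, \eqref{id_const2}, \eqref{id_const3}. These identities are precisely the shadow, at the level of structural constants, of the same super-Jacobi/super-derivation relations that are being applied at the free Lie level, so the cancellation is forced.

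For case 3), I would substitute $f_{xy}=[x,y]-\sum_v\alpha_{xy}^vv$ and $f_{yy}=[y,y]-\sum_v\alpha_{yy}^vv$ into $[f_{xy},y]-\tfrac12[x,f_{yy}]$. The terms $[[x,y],y]-\tfrac12[x,[y,y]]$ vanish identically in $L\langle T\rangle$ by \eqref{odd} (which holds already in the free Lie superalgebra since $y$ is odd). What remains is $-\sum_v\alpha_{xy}^v[v,y]+\tfrac12\sum_v\alpha_{yy}^v[x,v]$. Each bracket $[v,y]$ (respectively $[x,v]$) is rewritten, up to a sign from super-antisymmetry, as $\pm f_{\ast\ast}+\sum_u\alpha_{\ast\ast}^u u$ (or $\pm\tfrac12 f_{yy}$ when $v=y$, or $0$ when the relevant square vanishes). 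Collecting, the residual purely linear piece in each basis element $u\in X$ has coefficient
\[
-\sum_v\alpha_{xy}^v\alpha_{vy}^u+\tfrac12\sum_v\alpha_{yy}^v\alpha_{xv}^u,
\]
which is zero by \eqref{id_const}. Cases 4) and 5) follow the identical pattern: expand, cancel the free-Lie super-Jacobi piece via \eqref{odd} (resp.\ via \eqref{der2} for case 5 where $t$ acts through $d$), rewrite the surviving double sums as combinations of $f_{\ast\ast}$ and $g_\ast$ plus purely linear remainders, and observe that the linear remainders vanish by \eqref{id_const2} and \eqref{id_const3} respectively. In case 5) one uses that $A$ is a subalgebra, so $\alpha_{aa}^v=0$ whenever $v\notin B$, which is exactly what allows the resulting $[t,v]$ to be written in terms of $g_v\in S$.

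Finally I would note that every surviving term is of the shape $c\cdot(1\cdot[s]\cdot 1)$ with $s\in S$ and $a_i=b_i$ empty; the leading associative word $\overline{s}$ has length $2$, whereas the composition words $xyy$, $xxy$, $taa$ all have length $3$, hence are strictly larger in $\ll$. This matches the definition of Lie $S$-closedness and completes the triviality verification.

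The main obstacle is purely bookkeeping: correctly tracking signs under super-antisymmetry when rewriting $[v,y]$ or $[x,v]$ in cases where $v<y$ or $v<x$, handling separately the diagonal cases $v=y$ (case 3) and $v=x$ (case 4), and ensuring the relevant structural-constant identity is applied in exactly the form \eqref{id_const}, \eqref{id_const2}, or \eqref{id_const3}. The conceptual content is a single observation — the super-Jacobi/super-derivation identity holds in $L$ and is responsible both for the cancellation of the top bracket-of-brackets and for the cancellation of the linear remainder.
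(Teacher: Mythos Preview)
Your approach is exactly the one the paper takes: expand, cancel the bracket-of-brackets via \eqref{odd}, rewrite the remaining brackets as $f_{\ast\ast}$ (or $g_\ast$) plus linear terms, kill the linear residue with \eqref{id_const}, \eqref{id_const2}, \eqref{id_const3}, and observe that all surviving $f_{\ast\ast},g_\ast$ have leading words of length $2\ll 3$. Two small slips to fix in execution: in the diagonal case $v=y$ (odd) one gets $[y,y]=f_{yy}+\sum_u\alpha_{yy}^u u$ with no $\tfrac12$, and in case 5) the top cancellation $[[t,a],a]-\tfrac12[t,[a,a]]=0$ is again an instance of \eqref{odd} in the free Lie superalgebra (not \eqref{der2}, which only enters indirectly as the source of \eqref{id_const3}).
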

\begin{proof}

Consider case 3). It appears only in case $y$ is odd. 
Below we use identity~\eqref{odd} in the third line and \eqref{id_const} in the sixth line  
\begin{align*}
\langle f_{xy}, f_{yy}\rangle_{xyy} &= [f_{xy},y] -  \frac{1}{2} \Big[x, f_{yy}\Big] \\
&=\Big[[x,y] - \sum_{v \in X} \alpha_{xy}^v v, y\Big]- \frac 12\Big[x,  [y,y] - \sum_{v \in X} \alpha_{yy}^v v\Big]\\
&=\Big([[x,y],y]-\frac 12[x,[y,y]]\Big)
-\sum_{v \in X}\alpha_{xy}^v [  v, y]+  \frac 12\sum_{v \in X}\alpha_{yy}^v[x,   v]\\
& =
-\sum_{v \in X}\alpha_{xy}^v [  v, y]+  \frac 12\sum_{v \in X}\alpha_{yy}^v[x,   v]\\
&=-\sum_{v \in X}\alpha_{xy}^v (f_{vy} + \sum_{u\in X}\alpha_{vy}^u u )+  
    \frac 12\sum_{v \in X}\alpha_{yy}^v( f_{xv}  +\sum_{u\in X} \alpha_{xv}^u u)\\
&=-\sum_{v \in X}\alpha_{xy}^v f_{vy}+ \frac 12\sum_{v \in X}\alpha_{yy}^v f_{xv}
  - \frac 12 \sum_{u\in X} \Big(\sum_{v\in X}\Big(2\alpha_{xy}^v  \alpha_{vy}^u -\alpha_{yy}^v\alpha_{xv}^u \Big)\Big)u\\
&=-\sum_{v \in X}\alpha_{xy}^v f_{vy}+ \frac 12\sum_{v \in X}\alpha_{yy}^v f_{xv}.  
\end{align*} 
Here and below, consider the case  that we might get  $f_{xv}$ with $x<v$, $x,v\in X$.
In this case $f_{xv}=-(-1)^{|x||v|} f_{vx}$, where $f_{vx}\in S$.
We obtained polynomials with the leading words $\overline{f_{**}}$ of length 2, they are smaller than $xyy$  with respect to $\ll $ and the composition is trivial.

Consider the case 4). Recall that $x$ is odd. We use identity~\eqref{odd} in the third line
and \eqref{id_const2} in the sixth line  
\begin{align*}
\langle f_{xx}, f_{xy} \rangle_{xxy} &=  \frac{1}{2}[f_{xx},y] - [x, f_{xy}]\\
&=\frac 12\Big [[x,x]-\sum_{v\in X} \alpha^v_{xx}v,y \Big]-\Big[x,[x,y]-\sum_{v\in X}\alpha_{xy}^v v \Big]\\
&=\Big(\frac 12 [[x,x],y]- [x,[x,y]]\Big)
-\frac 12\sum_{v\in X} \alpha^v_{xx}[v,y ]+\sum_{v\in X}\alpha_{xy}^v [x,v] \\
&  = -\frac 12\sum_{v\in X} \alpha^v_{xx}[v,y ]+\sum_{v\in X}\alpha_{xy}^v [x,v]  \\
&=-\frac 12\sum_{v\in X} \alpha^v_{xx}\Big (f_{vy}  + \sum_{u\in X}\alpha^u_{vy} u \Big)
+\sum_{v\in X}\alpha_{xy}^v \Big (f_{xv}  + \sum_{u\in X}\alpha^u_{xv} u \Big) \\
&=-\frac 12\sum_{v\in X} \alpha^v_{xx} f_{vy} +\sum_{v\in X}\alpha_{xy}^v  f_{xv}
 -  \frac 12\sum_{u\in X}\Big( \sum_{v\in X} \Big(\alpha^v_{xx} \alpha^u_{vy}  
-2\alpha_{xy}^v \alpha^u_{xv}\Big)\Big) u  \\
&=-\frac 12\sum_{v\in X} \alpha^v_{xx} f_{vy} +\sum_{v\in X}\alpha_{xy}^v  f_{xv}
\end{align*}

As  before the leading words $\overline{f_{**}}$ above have length  2, they are smaller than $xxy$   with respect to $\ll$ and the composition is trivial.

Consider the case 5). This case holds only when $a$ is odd. Below we use identity~\eqref{odd} in the third line,
equality~\eqref{id_const3} in the sixth line, and that $[a,a]\in A$  
\begin{align*}
\langle g_a, f_{aa} \rangle_{taa}& =[g_a, a]- \frac{1}{2}[t, f_{aa}] \\
&=\Big[[t,a] - \sum_{v \in X} \beta_a^v v, a\Big]
  - \frac{1}{2}\Big[t, [a,a]-\sum_{v\in B} \alpha_{aa}^v v\Big] \\
&=\Big ([[t,a],a]-\frac 12[t, [a,a]]\Big) 
   - \sum_{v \in X} \beta_a^v [v, a]
  + \frac{1}{2}\sum_{v\in B} \alpha_{aa}^v [t,v] \\
  &  =  - \sum_{v \in X} \beta_a^v [v, a]
  + \frac{1}{2}\sum_{v\in B} \alpha_{aa}^v [t,v]  \\
&=   - \sum_{v \in X} \beta_a^v \Big(f_{va}  +  \sum_{u\in X}\alpha_{va}^u u \Big)
  + \frac{1}{2}\sum_{v\in B} \alpha_{aa}^v \Big(g_{v}  +  \sum_{u\in X}\beta_{v}^u u \Big)  \\
&=   - \sum_{v \in X} \beta_a^v  f_{va}  + \frac{1}{2}\sum_{v\in B} \alpha_{aa}^v  g_{v}
   - \frac 12\sum_{u\in X}\Big( \sum_{v\in X}2\beta_a^v \alpha_{va}^u  - \sum_{v\in B}\alpha_{aa}^v \beta_{v}^u  \Big)u  \\
&=   - \sum_{v \in X} \beta_a^v  f_{va}  + \frac{1}{2}\sum_{v\in B} \alpha_{aa}^v  g_{v}.
\end{align*}
Since the leading words $\overline{f_{**}}$ and $\overline{g_v}$ above have lengths 2, 
they are smaller than $taa$  with respect to $\ll $ and so we have a trivial composition in this case.
\end{proof}

\iffalse{ \begin{theorem} (Composition Lemma \cite{B-K-L-M}) If $S$ is a Gr\"obner–Shirshov basis for
the graded ideal $I$ generated by $S$, then for any $f \in I$, the leading word $\overline{f}$  of $f$ contains the leading word $\overline{s}$ of $s$ for some $s \in S$.
\end{theorem} }\fi

%*****************************************************************************************
\section{ Basis of HNN extensions of Lie superalgebras}
Applying  Theorem \ref{linear-basis} we derive 
the following normal form result for HNN extensions of Lie superalgebras (Theorem~\ref{basisHNN}).
Our goal is to establish the structure result for HNN extensions  of Lie superalgebras (Theorem~\ref{structureHNN} formulated as Theorem~A in Introduction).
Our results are analogues of similar classical results on HNN extensions in group theory~\cite{LyndonShupp}.

\begin{theorem} \label{shirshov} Let $H$ be the HNN extension of {a Lie  superalgebra $L$} defined by (\ref{eq-HNN}).
Then the images  in $H$ of the super-Lyndon-Shirshov monomials $f$  from the free Lie  superalgebra with the free generating 
set $T = X \cup \{ t \}$ such that $\overline{f}$ does not contain 
subwords $\overline{s}\in \overline S$ form a basis of $H$ as a linear space,  where 
\begin{equation}\label{Gr-Sh}
 \overline{S}  := \{ xy \ | \ x, y \in X,\ x > y \} \cup \{ xx \ | \ x \in X_1 \} \cup \{ ta  \ | \ a \in A \}.
\end{equation}
\end{theorem}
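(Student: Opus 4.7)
The strategy is to verify that the set $S$ from~(\ref{setS}) is a Gr\"obner-Shirshov basis for the ideal $I$ it generates in the free Lie superalgebra $L\langle T \rangle$ with $T = X \cup \{t\}$, and then to invoke Theorem~\ref{linear-basis}, which identifies the $S$-reduced super-Lyndon-Shirshov monomials with a linear basis of $L\langle T\rangle/I$. Since the presentation~(\ref{HNN}) gives $H = L\langle T\rangle/I$, this basis descends to the basis of $H$ claimed in the theorem.

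The verification of the Gr\"obner-Shirshov property reduces to showing that every Lie composition $\langle p,q\rangle_w$ with $p,q\in S$ is trivial. The leading words listed after~(\ref{setS}) are $xy$ for $x>y$, $xx$ for odd $x\in X_1$, and $ta$ for $a\in B$; all have length two, so the overlap words $w$ have length three. Using that $t$ is the $<$-maximal letter of $T$ (so $t$ can appear only as the leftmost letter of any leading word) and that the only length-two leading word of the form $x^2$ has both letters equal, a direct enumeration of overlaps yields precisely the five cases 1)--5) listed before Proposition~\ref{Prop345}. Cases 1) and 2), which involve no odd self-brackets, are the ones handled in~\cite{L-P-Z} (following Wasserman~\cite{W}): there $\langle p,q\rangle_w$ is rewritten as a linear combination of expressions $(a_i[s_i]b_i)$ with $s_i\in S$ and $\overline{(a_i[s_i]b_i)}\ll w$. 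Cases 3), 4), 5), which involve the odd self-brackets $[y,y]$, $[x,x]$ and $[a,a]$, are exactly the content of Proposition~\ref{Prop345}: there the super Jacobi identities~(\ref{odd}) and~(\ref{der2}) kill the cubic contributions, and the consistency relations~(\ref{id_const})--(\ref{id_const3}) among the structural constants ensure that the remaining linear combination of the generators $f_{**}$ and $g_{*}$ has no residual $X$-linear term. In every case the resulting summands have leading words of length two, hence strictly $\ll$-below the length-three words $xyy$, $xxy$, $taa$.

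Combining the two inputs, $S$ is closed under Lie composition and is therefore a Gr\"obner-Shirshov basis for $I$. Reading off the leading words gives the set $\overline{S}$ in~(\ref{Gr-Sh}) (with $\{ta\mid a\in A\}$ understood as $\{ta\mid a\in B\}$, since $B\subseteq X$ is the basis of $A$ inside the alphabet). Theorem~\ref{linear-basis} then provides the claimed linear basis of $H$: it consists of the images of the super-Lyndon-Shirshov monomials $f$ in $L\langle T\rangle$ whose associative leading word $\overline{f}$ contains no subword from $\overline{S}$. The principal obstacle was the triviality of compositions~3), 4), 5), but this has already been settled in Proposition~\ref{Prop345}, so the remaining work is only the bookkeeping of leading words and the enumeration of overlap patterns described above.
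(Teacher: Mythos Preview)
Your proposal is correct and follows essentially the same approach as the paper's own proof: cite \cite{L-P-Z} for cases 1)--2), invoke Proposition~\ref{Prop345} for cases 3)--5), conclude that $S$ is a Gr\"obner-Shirshov basis, and then apply Theorem~\ref{linear-basis}. Your write-up is simply more detailed than the paper's three-line argument, including the useful clarification that $\{ta\mid a\in A\}$ in~(\ref{Gr-Sh}) really means $\{ta\mid a\in B\}$.
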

\begin{proof} By computations of~\cite{L-P-Z} and Proposition~\eqref{Prop345}, the set of polynomials $S$~\eqref{setS} is closed with respect to Lie compositions. 
Thus, $S$ is a Gr\"obner-Shirshov basis for the HNN-extension~\eqref{HNN}.
It remains to consider the respective set of leading words $\overline S$~\eqref{Gr-Sh} and apply Theorem~\ref{linear-basis}.
\end{proof}

Let $\{a_i\}$ and $\{x_j\}$ denote homogeneous bases for the subalgebra $A$ and a complement of $L$ modulo $A$, respectively,
these elements being indexed by some sets. 
As above we fix a linear order such that $a_i<x_j<t$.
(We draw attention that now  the notation for bases is different from the convention explained in  the paragraph above ~\eqref{constants} where dash notation was adopted.)
Denote canonical basis elements of the universal enveloping algebra $U(L)$ as follows
\begin{align}\label{AX}
\begin{split}
A^{\alpha}&:=a_{i_1}^{n_{i_1}}\cdots a_{i_s}^{n_{i_s}},\quad \qquad 
{ \alpha = (n_{i_1}, \ldots, n_{i_s} ),\  | \alpha |:=n_{i_1}+\cdots+n_{i_s} ,\ | \alpha |\ge 0;}  \\    
X^{\beta}&:=x_{j_1}^{m_{j_1}}\cdots x_{j_s}^{m_{j_s}}, \qquad 
{ \beta = (m_{j_1}, \ldots, m_{j_s}),\
| \beta|:=m_{j_1}+\cdots+m_{j_s},\ | \beta |\ge 0;}   
\end{split}
\end{align}
where $i_1<\cdots <i_s$, $j_1<\cdots <j_s$, $s\ge 0$,  $n_i,m_j\ge 0$, while for odd elements  $n_i,m_j\in\{0,1\}$. 
We note that the notation $A^{\alpha}$ suppresses the indices $i_1, \ldots, i_s$.

\begin{theorem} \label{basisHNN} 
Let $H$ be the HNN extension of a Lie  superalgebra $L$ defined by (\ref{eq-HNN}), where we assume that $dim(L/A)\ge 1$. Then using notations~\eqref{AX}
\begin{enumerate}
\item 
A  linear  basis of the universal enveloping algebra $U(H)$ consists of images of words
\begin{equation}\label{UH}
A^{\alpha_0}X^{\beta_0}\cdot  
(\underbrace{t^{\gamma_1}X^{\beta_1}\cdot  t^{\gamma_2} X^{\beta_2}\cdot \ldots \cdot t^{\gamma_s} X^{\beta_s}}_{\gamma_i,|\beta_i|\ge 1})\cdot\,  t^{\delta},\qquad s\ge 0,
\end{equation}
where   $| \alpha_0| , |\beta_0| , \delta \ge 0$,
and any odd basis element $x_j$ (or $a_i$), appears in any fixed ordered factor $X^{\b_k}$ (or $A^{\alpha_0}$) at most once.
 In case $s= 0$ we assume the middle product be absent.  
More precisely, we have one of the following
\begin{enumerate}
\item  $s=0$ and we get $A^{\alpha_0}X^{\beta_0}t^{\delta}$ 

\item $s\ge 1$ and all integers  $( \gamma_1, |\beta_1|,\ldots, \gamma_s,|\beta_s|)$  are non-zero.
\end{enumerate}

\item  
A  linear  basis of $H$ consists of monomials
\begin{enumerate}
\item the basis elements of $L$, namely $\{a_i\}$, $\{x_j\}$,
\item $\{t, [t,t]\}$ (where $[t,t]$ appears in case $t$ is odd), 
\item 
monomials with an admissible bracketing $(u)$, where  
\begin{equation} %\label{UH2}
u:=t^{\gamma_1}X^{\beta_1} \cdot  t^{\gamma_2} X^{\beta_2}\cdot \ldots \cdot t^{\gamma_{s}}X^{\beta_s},\quad  \gamma_j, |\beta_i| \ge 1, \qquad s\ge 1
\end{equation}
(these are all middle factors in~\eqref{UH}) that are super Lyndon-Shirshov words. 
\end{enumerate}
\end{enumerate}
\end{theorem}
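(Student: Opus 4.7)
The plan is to combine Theorem~\ref{shirshov}, Theorem~\ref{asss}, Theorem~\ref{asss2}, and Theorem~\ref{linear-basis2}. By Theorem~\ref{shirshov}, the set $S$ from~\eqref{setS} is a Gr\"obner-Shirshov basis for $H$, so by Theorem~\ref{asss} it is also a Gr\"obner-Shirshov basis for the universal enveloping algebra $U(H)=A\langle T\rangle/J$ with $T=X\cup\{t\}$. Theorem~\ref{asss2} then yields that the set of $S$-reduced associative words in $T^{*}$ forms a linear basis of $U(H)$, reducing Part~(1) to the purely combinatorial identification of these words with the shape~\eqref{UH}. For Part~(2), Theorem~\ref{linear-basis2} shows that, after fixing an admissible bracketing $(u)$ for each super Lyndon-Shirshov word $u\in T^{*}$, the elements $(u)$ with $u$ super-LS and $S$-reduced form a linear basis of $H$, reducing Part~(2) to the identification of which normal forms from Part~(1) are super Lyndon-Shirshov.

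For Part~(1), I would parse an arbitrary $S$-reduced word $w\in T^{*}$ by splitting it at each occurrence of the letter $t$. Each maximal subfactor of $w$ lying in $X^{*}$ must avoid the subwords $xy$ (with $x,y\in X$, $x>y$) and $xx$ (with $x\in X_{1}$) from~\eqref{Gr-Sh}, so it is non-decreasing with respect to $<$ and contains each odd basis element at most once. Because the chosen order satisfies $a<x<t$ for $a\in B$ and $x\in X\setminus B$, every such factor splits uniquely as $A^{\alpha}X^{\beta}$ in the notation of~\eqref{AX}. The remaining forbidden subword $ta$ with $a\in B$ forces every $X^{*}$-factor that follows a $t$ to consist only of elements of $X\setminus B$, hence to have the pure form $X^{\beta}$. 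Assembling these observations, $w$ acquires precisely the shape~\eqref{UH}, and conversely every word of that shape is visibly $S$-reduced.

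For Part~(2), I would characterize the $S$-reduced super Lyndon-Shirshov words case by case using the fact that the first letter of any Lyndon-Shirshov word of length $\ge 2$ is maximal among its letters, and that $t$ is the maximum of $T$. Words with no $t$ are non-decreasing elements of $X^{*}$; any such word of length $\ge 2$ is strictly dominated by a cyclic rotation, so is not LS, and the only super-LS alternative $xx$ with $x\in X$ odd is excluded because $xx\in\overline{S}$. This leaves only the singletons $\{a_{i}\},\{x_{j}\}$ of~(a). A word containing $t$ must begin with $t$, since otherwise cycling $t$ to the front strictly increases it; a pure power $t^{n}$ is LS (or super-LS) only for $n=1$ or $n=2$ with $t$ odd, yielding~(b). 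In the remaining case $w$ has the shape~\eqref{UH} with $|\alpha_{0}|=|\beta_{0}|=0$, and one shows $\delta=0$: if $\delta\ge 1$ and $s\ge 1$, cycling the trailing $t^{\delta}$ to the front produces $t^{\delta+\gamma_{1}}X^{\beta_{1}}\cdots X^{\beta_{s}}$, which strictly exceeds $w$ at position $\gamma_{1}+1$ (the rotation has $t$ there, while $w$ has an element of $X\setminus B$); the super-LS alternative $w=uu$ would likewise force $u$ to end in $t$, and the same rotation argument applied to $u$ rules it out. Hence $\delta=0$ and $w$ matches the shape required by~(c).

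The main obstacle is this rotation analysis: verifying which cyclic rotations of a word starting with $t$ are strictly larger, and simultaneously ruling out the super-Lyndon-Shirshov alternative $w=uu$. Once this combinatorial characterization is complete, both parts follow directly: admissible bracketings of the identified words exist by Theorem~\ref{free} (the standard bracketing of any super-LS word has leading coefficient in $\{1,2\}$), and the corresponding elements $(u)\in H$ form a linear basis by Theorem~\ref{linear-basis2}.
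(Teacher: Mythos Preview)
Your proposal is correct and follows essentially the same approach as the paper: both use Theorem~\ref{asss} plus Theorem~\ref{asss2} to reduce Part~(1) to parsing $S$-reduced words, and Theorem~\ref{linear-basis2} to reduce Part~(2) to selecting the super Lyndon--Shirshov words among those, with a case analysis on whether $t$ occurs. One minor imprecision: the claim that a non-decreasing word in $X^{*}$ of length $\ge 2$ is \emph{strictly} dominated by a cyclic rotation fails for constant words $x^{n}$ (the rotation equals $w$), but your conclusion still holds since equality already violates the strict inequality required for LS, and the super-LS possibility $w=uu$ then forces $u=x$ odd, which you correctly exclude via $xx\in\overline{S}$.
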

\begin{proof}
Claim (1).  We know already that $S$ is a Gr\"obner-Shirshov basis for the Lie superalgebra $H$.
By~\cite[Theorem 2.8]{B-K-L-M}, stated here as Theorem \ref{asss},  $S$  is also a Gr\"obner-Shirshov basis for the universal enveloping algebra $U(H)$.
Thus,  words  $w$ in $T=\{a_i\}\cup \{x_j\}\cup \{t\}$ not containing the prohibited subwords $\bar S$~\eqref{Gr-Sh}
yield a  linear basis of $U(H)$. 
By the structure of  the words  in $\overline S$,
in one such $w$ consecutive  entries from the basis elements of $L$, namely $a_i$, $x_j$, cannot decrease, squares of odd elements $a_i$, $x_j$ being prohibited.
Thus we arrive at factor  $A^\alpha X^\beta$, which can be only followed by $t^\gamma$,
after the latter  in case $\gamma>0$ we can put only $X^{\beta'}$ because $t a_i$ are prohibited, etc.
The latter alternating factors $t^\gamma$ and $X^{\beta}$ should be nontrivial. 
If the last nontrivial factor is $t^\gamma$, we assign it as the third factor $t^\delta$ in~\eqref{UH}, otherwise that third factor is trivial. 
Alternatively, to get this basis one can also consider all possible paths in so called Ufnarovskii graph, see~\cite{Ufn82,Ufn}.

Claim (2). 
By Theorem \ref{linear-basis2} a linear basis for $H$ is the set of monomials that is formed by admissible bracketings $(w)$ on all super Lyndon-Shirshov words $w$ that
do not contain a subword from $\overline{S}$.

We just determined all associative words in $T$ without prohibited subwords~$\overline S$ as list~\eqref{UH} described in Claim (1).
It remains to specify those words $w$  from~\eqref{UH} that are super Lyndon-Shirshov.
 
 Case (a). 
The word  $w$ has no $t$, then  $w=A^{\alpha_0}X^{\beta_0}$. If $|\alpha_0|, |\beta_0| > 0$ then $w \not= u u$, 
hence is Lyndon-Shirshov and  $w >  x_i A^{\alpha_0} X^{\beta_1 }$, where $X^{\beta_0} = X^{\beta_1} x_i$, 
a contradiction with $x_i > a_{i_1}$, where  $A^{\a_0}$ starts with $ a_{i_1}$. 
If $w = A^{\alpha_0} = a_{i_1}^{n_{i_1}}\cdots a_{i_s}^{n_{i_s}}$ with $a_{i_1} < \ldots < a_{i_s}$ then since $w$ is super Lyndon-Shirshov then it is either of the type $w = uu$, where $u$ is odd Lyndon-Shirshov word or $w$ is Lyndon-Shirshov. In the latter $w = a_{i_1}^{n_{i_1}}\cdots a_{i_s}^{n_{i_s}} > a_{i_s} a_{i_1}^{n_{i_1}}\cdots a_{i_s}^{n_{i_s}-1}$, a contradiction with $a_{i_1} < a_{i_s}$ if $s > 1$. If $w = uu$ then $s = 1$. Finally if $s = 1$ we have $w = a_{i_1}^{n_{i_1}}$ is super Lyndon-Shirshov that implies that $n_{i_1} = 1$ i.e. $w \in \{ a_i \}$ (where in case of odd $a_i$ we use our assumption on tuples~\eqref{AX}). 
Similarly in the case $w = X^{\beta_1 }$ we deduce that $w \in \{ x_j \}$.

Case (b). The word  $w$ contains $t$ and no $x_j$,   then $w=A^{\alpha_0} t^{\delta}$. If it contains some $a_j$ the word is not a super Lyndon-Shirshov word  since $tA^{\alpha_0} t^{\delta-1} > w$.
Hence either $w=t$ or $w=t^2$ in case $t$ is odd. Then $[w]$ is $t$ or $[t,t]$ for odd $t$.

Case (c). The word $w$ contains $t$ and some $x_j$. Recall that $t$ is the largest letter of the alphabet.
So, the super Lyndon-Shirshov property implies that $w$ starts with $t$.
We claim that $w$  ends with some $X^\beta$, where $|\beta|>0$.
Indeed if $w$ ends with $t$, then $w$ is not Lyndon-Shirshov. Then $w = uu$ with $u$ odd and Lyndon-Shirshov, so $u$ ends with $t$. But then $u = t \ldots t$ is Lyndon-Shirshov, a contradiction.
\end{proof}

%****************************************************************************

\section{Structure of HNN extensions of Lie superalgebras} 

The following is formulated in Introduction as our main result, Theorem~A.

\begin{theorem} \label{structureHNN} 
Let  $\{ x_j \}$  form a linear basis of a Lie superalgebra $L$ modulo its proper subalgebra $A$, and $d:A\to L$  be  a derivation.
Then the respective HNN extension of $L$ is a direct sum of two subalgebras
$$
H\cong L\oplus F(W),
$$
where $F(W)$ is the free Lie superalgebra with the free generating set
\begin{equation}\label{W}
 W:=\{ [t x_{i_1}x_{i_2}\cdots\, x_{i_s}] \mid x_{i_j}\in X,\ i_1\le i_2\le \cdots\le  i_s,\ s \ge 0  \},
\end{equation}
where odd $x_k$ appear at most once.
\end{theorem}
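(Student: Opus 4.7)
\noindent
The plan is to establish the decomposition by proving that the natural Lie-superalgebra homomorphism $\psi\colon F(W)\to H$ extending the inclusion $W\subseteq H$ is injective with image complementary to $L$. The main engine will be the induced multiplication map $\Phi\colon U(L)\otimes U(F(W))\longrightarrow U(H)$ sending $u\otimes v\mapsto u\cdot \psi(v)$, where $\psi$ also denotes the associative extension $U(F(W))\cong K\langle W\rangle\to U(H)$. First I would show that $\Phi$ is a linear isomorphism by a triangular argument on leading words; then I would descend from this tensor-product fact to the Lie-algebra decomposition.

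For the leading-word analysis, fix a basis of $U(L)\otimes U(F(W))$ consisting of tensors $A^{\alpha}X^{\beta}\otimes(w_{j_1}\cdots w_{j_k})$, where $A^{\alpha}X^{\beta}$ runs over ordered $L$-basis monomials (as in the prefix of Theorem~\ref{basisHNN}(1)) and $w_{j_1}\cdots w_{j_k}$ over associative words on $W$. By Lemma~\ref{Ladmiss}(1), $\overline{\psi(w)}=tx_{i_1}\cdots x_{i_s}\in T^{*}$ for $w=[tx_{i_1}\cdots x_{i_s}]\in W$; since leading words in $A\langle T\rangle$ under length-lex order are multiplicative, the leading word of $\Phi(A^{\alpha}X^{\beta}\otimes w_{j_1}\cdots w_{j_k})$ equals $A^{\alpha}X^{\beta}\cdot \overline{\psi(w_{j_1})}\cdots \overline{\psi(w_{j_k})}$. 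Consolidating consecutive $t$'s, this word has exactly the form $A^{\alpha_0}X^{\beta_0}(t^{\gamma_1}X^{\beta_1'}\cdots t^{\gamma_m}X^{\beta_m'})t^{\delta}$ of a $U(H)$-basis element from Theorem~\ref{basisHNN}(1). The correspondence is a bijection: each $U(H)$-basis word decomposes uniquely into $\overline{\psi(w_{j_\ell})}$-blocks by letting every $t$ start a fresh block that continues until the next $t$. The standard triangular argument upgrades this leading-word bijection to a linear isomorphism $\Phi$.

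From the isomorphism $\Phi$ three conclusions follow. First, $\psi\colon F(W)\to H$ is injective because $\psi(f)=\Phi(1\otimes f)$. Second, $L\cap \psi(F(W))=0$: if $\ell=\psi(f)$ with $\ell\in L$, $f\in F(W)$, then $\Phi(\ell\otimes 1)=\Phi(1\otimes f)$, but these are supported on disjoint sets of tensor-basis elements, forcing $\ell=0=f$. Third, restricting the leading-word bijection to the sector $\alpha=\beta=\delta=0$ gives a bijection between super-Lyndon-Shirshov monomials $v$ on $W$ (a basis of $F(W)$ by Theorem~\ref{free}) and super-Lyndon-Shirshov words $u\in T^{*}$ of the forms (b), (c) of Theorem~\ref{basisHNN}(2). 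For each such $u$ the matching $v$ satisfies $\overline{\psi(v)}=u$ with leading coefficient $\pm\alpha_u$ (where $\alpha_u\in\{1,2\}$ is the standard leading coefficient), so $\pm\psi(v)$ is an admissible bracketing of $u$. A short induction on $\ll$-order of $u$ then places every basis element $(u)$ of $H$ into $L+\psi(F(W))$, using that any two admissible bracketings of $u$ differ only by terms with strictly smaller leading word. Combining all three yields $H=L\oplus \psi(F(W))\cong L\oplus F(W)$ as Lie superalgebras.

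The main obstacle will be establishing the leading-word bijection in the second paragraph. One must verify that the length-lex leading word in $A\langle T\rangle$ is strictly multiplicative, that the block decomposition of a $U(H)$-basis word into $\overline{\psi(w_{j_\ell})}$-chunks is well-defined and unique, and that when identifying $\psi(v)$ with the chosen admissible bracketing of its leading super-Lyndon-Shirshov word, the scalar appearing is always in $\{\pm 1,\pm 2\}$ (arising from super-Jacobi sign flips and the doubling of odd elements), so that admissibility is preserved. The first two points are combinatorial facts about length-lex order and the shape of $W$; the third follows from Lemma~\ref{Ladmiss}(2) and the definition of admissible bracketing.
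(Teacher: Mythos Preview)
Your packaging through the linear isomorphism $\Phi\colon U(L)\otimes U(F(W))\to U(H)$ is sound, and its justification is essentially the paper's Step~1 (extended by the harmless $A^{\alpha}X^{\beta}$-prefix); conclusions (i) and (ii) follow cleanly. So far the route coincides with the paper's: both reduce to the same combinatorics of block words $tX^{\beta}$.

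The genuine gap is in your third step. Two unproved claims carry all the weight there: that the associative bijection restricts to a bijection between super-Lyndon--Shirshov $W$-monomials and super-LS $T$-words of types~(b),(c), and that $\overline{\psi(v)}=u$ for the matching pair. Neither is automatic, and both hinge on a point you never address: the total order on $W$. If you equip $W$ with the length-lexicographic order inherited from $T$ (the default in Gr\"obner--Shirshov theory), the correspondence already fails for $w_1=tx_1$, $w_2=tx_1x_1$: then $w_1\ll w_2$, so $w_2w_1$ is an LS $W$-word, but its $T$-reading $tx_1x_1tx_1$ is \emph{not} LS in $T$ since the cyclic rotation $tx_1tx_1x_1$ is lexicographically larger. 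The paper avoids this by ordering $\overline{W}$ \emph{purely} lexicographically in $T$, and then devotes its Steps~2--3 to proving that with this choice the two super-LS notions agree and that the substituted bracketing $([w_1]\cdots[w_s])$ has leading $T$-word $w_1\cdots w_s$; the delicate case is when one $\overline{W}$-letter is a proper $T$-prefix of another, so that the first differing position in $T$ falls inside the longer block rather than at a block boundary. Lemma~\ref{Ladmiss}(2) enters only as one ingredient in handling that prefix case, so invoking it alone does not close the argument. Once you specify the pure-lex order on $W$ and supply that two-case comparison, the remainder of your outline (the triangular induction placing each $(u)$ into $L+\psi(F(W))$) is the paper's Steps~4--5 rephrased.
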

\begin{proof}
As above, we denote $T:=\{t\}\cup X$.
Consider the following set of words:
\begin{equation}\label{wbar}
\overline W:=\{ t X^\beta \mid |\beta| \ge 0 \}\subset T^*,
\end{equation}
where in the particular case $|\b|=0$ we get $t\in \overline W$.

\smallskip
1) As we remarked above~\eqref{utx}, $\overline W$ consists of  Lyndon-Shirshov words. 
First, consider all possible unordered associative products in $U(H)$ 
\begin{equation}\label{www}
\{w_{j_1}w_{j_2}\cdots w_{j_n} \mid w_{j_i}\in \overline W,\ n\ge 0\}\subset U(H).
\end{equation} 
Second, consider also all products~\eqref{UH}, where the first factor is trivial:
\begin{equation}\label{UH2}  
\{ t^{\gamma_1}X^{\beta_1}\cdot  t^{\gamma_2} X^{\beta_2}\cdot \ldots \cdot t^{\gamma_s} X^{\beta_s}\cdot t^{\delta} \mid \gamma_i,|\beta_i|\ge 1,\  \delta,s\ge 0\}\subset U(H).
\end{equation}
We claim that there exists a bijective correspondence between products~\eqref{www} and~\eqref{UH2}.  
One checks that, we uniquely rewrite  product~\eqref{UH2}  as a product~\eqref{www} by 
\begin{equation*} 
t^{\gamma_1-1 }\cdot tX^{\beta_1}\cdot  t^{\gamma_2-1 }\cdot tX^{\beta_2}\cdot  \ldots \cdot t^{\gamma_s-1 } \cdot tX^{\beta_s}\cdot t^\delta\in \overline W^*,
\end{equation*}
where trivial factors $t^{\g_i-1}$, $t^\delta$ above are omitted.

Hence, all different products of the form~\eqref{www} are linearly independent because they correspond to some linearly independent elements~\eqref{UH}.
Therefore, the associative subalgebra  $A(\overline  W)\subset U(H)$ generated by $\overline  W$ in $U(H)$ is the free associative algebra 
with the free generating set~$\overline  W$, and all products of the second and third factors in~\eqref{UH}  yield its basis.

\smallskip
2) Consider associative $\overline W$-words~\eqref{www}
\begin{equation}\label{uuu}
u=w_1 w_2\cdots w_s,\quad\text{where}\quad w_i=t X^{\b_i}=t x_{j_1}\cdots x_{j_i}\in \overline W, \quad    x_{j_1}\le \cdots \le x_{j_i},\ |\b_i|\ge 0,
\end{equation}
where to avoid additional indices here and below some $w_j$ may be equal. 
Also, each odd $x_k$ appears in a fixed $X^{\b_j}$ at most once.
Let us show that $u$ is a Lyndon-Shirshov word in the alphabet  $T$ if and only if it is a Lyndon-Shirshov word in the alphabet $\overline W$~\eqref{wbar}, 
where the elements of  $\overline W$ are ordered (purely!) lexicographically. 
The respective lexicographic orders in $T ^*$ and ${\overline{W}} ^*$  will be denoted as $>_{T}$ and $>_{\overline W}$, respectively.

Let $u=u_1u_2$, we take a cyclic transformation $\widetilde u=u_2u_1$ and consider whether it can be  bigger than $u$  with respect to $>_{T}$.
Assume that this transformation splits a letter $w_k=tX^\b\in\overline W$, $|\b|>0$, inside $u$.
Namely, we have $w_k=w'w''$, where both subwords $w',w''$ are nontrivial and $u_1=u_1'w'$ and $u_2=w''u_2'$. 
Then $w''=X^{\b_1}$ and $w'=t X^{\b_2}$, where $|\b_1|>0$.  We obtain 
$$\widetilde u=u_2u_1=w''u_2' u_1'w'= X^{\b_1} u_2' u_1'w'<_{T}u$$ 
because $u$ starts with $t$.
Thus, in order to investigate whether $u$ is a Lyndon-Shirshov word in alphabet  $T$ it is sufficient to scrutinize its cyclic transformations
that keep its $\overline W$-letters. 
 
Suppose that  $u$~\eqref{uuu} is a Lyndon-Shirshov $\overline W$-word (not a super yet).
Then $u>_{\overline W}\widetilde u=w_2w_3\cdots w_sw_1$ (we take a particular cyclic transformation for simplicity of notations). 
Since $\overline W$-lengths are the same, there is the first $\overline W$-letter at which this words are different, namely, $w_k>w_{k+1}$. 

Case a) there exists the first $T$-letter at which  $w_k$, $w_{k+1}$ are different. Then the same letter yields $u>_{T} \widetilde u$.

Case b) $w_k$ is a proper prefix in $w_{k+1}$. Since $w_k=t X^\b$, $|\b|\ge 0$,  we have $w_{k+1}=t X^\b X^{\b_1}$, where $|\b_1|>1$. We obtain
$$ u=\underline{w_1\cdots w_{k-1}t X^\b}\cdot t X^\b X^{\b_1} \cdots\ >_{T}\ \widetilde u=\underline{w_2\cdots w_{k} t X^\b}\cdot X^{\b_1}\cdots,  $$
because the underlined prefixes coincide while the subsequent letters yield the inequality. 
Above we used that there exists $(k+1)$th letter in~\eqref{uuu}. Assume that this is not the case, i.e. $u$, $\widetilde u$ end with $w_k$ and $w_1$, respectively.
Comparing $T$-lengths of the words, we see that $w_k$, $w_1$ have the same number of $T$-letters and subcase b) does not hold since $w_k$ is a proper prefix of $w_1$.  
Hence $u$ is a Lyndon-Shirshov $T$-word.
The reverse statement is proved similarly.

Moreover, word $u$~\eqref{uuu} is a super Lyndon-Shirshov $T$-word if and only if it is a super Lyndon-Shirshov $\overline W$-word. 
Indeed, it is sufficient to observe that while presenting $u=u_1u_1$, where $u_1$ is  an odd Lyndon-Shirshov $T$-word, we cannot split $\overline W$-letters  since $u$ starts with the letter $t$, hence $u_1$ starts with the letter $t$.

\smallskip
3) Consider a Lyndon-Shirshov $T$-word $u$~\eqref{uuu} (we start with not a super one). By arguments above, it is also a Lyndon-Shirshov $\overline W$-word, for the latter
we take an admissible bracketing $(w_1w_2\cdots w_s)$. 
For each $w_i\in \overline W$ we consider standard bracketing~\eqref{stand}, which is admissible, and substitute in the bracketing above.
We claim that we get an admissible bracketing:
\begin{equation}    \label{admiss}
(u):= ([w_1][w_2]\ldots [w_s]).
\end{equation}
Since the bracketing $(w_1w_2\cdots w_s)$ is admissible, we write it specifying the arguments as
\begin{align}\label{w_brack}
 (w_1\cdots w_s) &=w_1\cdots w_s+\sum_{1\ne \pi \in Sym(s)} \lambda_\pi w_{\pi(1)}\cdots w_{\pi(s)},\quad \lambda_\pi\in \Z, \\
\text{where}&\quad \ w_1\cdots w_s \,>_{\overline W}\, v_\pi:=  w_{\pi(1)}\cdots w_{\pi(s)}. \label{w_brack2}
\end{align}
Then 
\begin{align} ([w_1]\ldots [w_s]) &=[w_1]\cdots [w_s]+\sum_{1\ne \pi \in Sym(s)} \lambda_\pi [w_{\pi(1)}]\cdots [w_{\pi(s)}],\quad \lambda_\pi\in \Z.
\end{align}
Our aim is to prove that~\eqref{admiss} is an admissible bracketing of $u$ as a Lyndon-Shirshov $T$-word~\eqref{uuu}. Thus we need to show that $\overline{(u)} = u = w_1 \cdots w_s$ and the leading coefficient i.e. the coefficient of $u$ in $(u)$ is 1. Since $u$ is not a super $T$-word, we already have in (\ref{w_brack}) that the coefficient of $w_1 \cdots w_s$ is 1. Furthermore by construction $w_i$ is the leading term of $[w_i]$ with leading coefficient 1. Thus it remains to show that  $\overline{(u)} = u = w_1 \cdots w_s$.

Substitute expansions $[w_i]=w_i+\text{\it smaller tail words}$ into  $[w_1]\cdots [w_s]$, 
we get $w_1\cdots w_s$ and $T$-smaller words because smaller tail words are of the same length as respective $w_i$.

Consider $\pi\ne 1$ and respective $v_\pi$, 
assume that the first difference in~\eqref{w_brack2} is at $k$th $\overline W$-letter, where $1\le  k< s$.
We substitute expansions  $[w_i]=w_i+$ {\it smaller words of the same length}, where $i\in\{1,\ldots, k-1\}$, 
in  $[ w_{\pi(1)}]\cdots [w_{\pi(s)}]$. 
We are interested only in appearing words  in $[ w_{\pi(1)}]\cdots [w_{\pi(s)}]$  with the prefix $w_1\cdots w_{k-1}$, because the words with the remaining prefixes are smaller by arguments as above. 
By construction, $w_k> w:=w_{\pi(k)}$. 

Case a). Assume that here exists the first $l$th $T$-letter at which   $w_k$ and $w$  are different. Let $\widetilde w$ be a word of the expansion of $[w]$.
Let $w',w'',w'''$ be the prefixes of length  $l$ of  $w_k$, $w$ and $\widetilde w$.
Then  $w'>_{T} w''\ge_{T} w'''$ by the last $l$th letter. Hence,
$w_1\cdots w_{k-1}w'>_{T} w_{\pi(1)}\cdots w_{\pi{(k-1)}} w'''$ by the same letter. Therefore, 
$w_1\cdots w_s>_T\widetilde w$. %%is bigger than all $T$-words that this case yields to the expansion of $(u)$.

Case b). Consider that $w_k$  is a proper prefix of $w$, we have
$w_k=t X^\b> w= t X^\b X^{\b_1}$,  where $|\b|\ge 0$, $|\b_1|\ge 1 $.
By Lemma~\ref{Ladmiss},  expansion $[w]$ has only one word with prefix $t$, namely, $w=tX^\b X^{\b_1}$ . Let $w_{k+1}=tX^{\b_2}$, $|\b_2|\ge  0$, then
$$
w_1\cdots w_s=\underline{w_1\cdots w_{k-1} tX^\b}\cdot tX^{\b_2}\cdots \, >_{T}\,
\underline{w_1\cdots w_{k-1}t X^\b}\cdot X^{\b_1}\cdots=v_\pi,
$$
where the underlined prefixes are equal, and the inequality is due to the subsequent letter, and expansions of the remaining factors are not important.
Here we used that $w_k$ is not the last letter  since by construction above  $k<s$.
We proved that~\eqref{admiss} is an admissible bracketing of a Lyndon-Shirshov word~\eqref{uuu}.

Next, let  us extend part 3) to the case of super Lyndon-Shirshov words~\eqref{uuu}. 
Consider a super Lyndon-Shirshov word $u=u_1u_1$ in the alphabet $\overline W$, 
where $u_1=w_1w_2\cdots w_s$, $w_i\in \overline W$, is an odd Lyndon-Shirshov word in the alphabet $\overline W$  
with an admissible bracketing  $(u_1)=(w_1w_2\cdots w_s)$.
We consider a particular bracketing (which is sufficient for our considerations in part 4) below)
\begin{equation}\label{u1u1} 
(u) := [ (u_1), (u_1)] = 2 (u_1) (u_1).
\end{equation}
We claim that this is an admissible bracketing for the super Lyndon-Shirshov word $u$ in the alphabet $T$ (see part 2)).
Indeed, by the arguments of part 3) above, $\overline {(u_1)}=u_1=w_1w_2\cdots w_s$, the leading coefficient being 1. 
We substitute this value into~\eqref{u1u1}, and using that words in the expansion of $(u_1)$ are of the same length with respect to $T$, we get
$$ \overline{(u)}=u_1u_1= u,$$
and the leading coefficient is 2.

\smallskip
4) Consider a particular case of the setting above. 
For any word $w\in \overline W$~\eqref{wbar} we fix its standard bracketing $[w]$ of a Lyndon-Shirshov $T$-word given by~\eqref{stand}.
Consider all products $u$ of the second and third factors in~\eqref{UH}.
By Part~1), we have $u=w_1\cdots w_s$~\eqref{uuu}, namely we get exactly the basis of the free associative algebra in the set $\overline W$. 
Now assume that $u$ is a super Lyndon-Shirshov word in $\overline W^*$. 
(In this way we get all super Lyndon-Shirshov words 
 in $\overline W^*$ as products of the second and third factors in~\eqref{UH}). 
We consider the respective standard bracketing of the super Lyndon-Shirshov $\overline W$-monomial  $[w_1 \ldots w_s]$ and take the $T$-bracketing obtained by substitution
\begin{equation}\label{w1ws}
(u):=  [[w_1]\ldots [w_s]], 
\end{equation} 
(which does not coincide in general with the standard bracketing of the super Lyndon-Shirshov $T$-monomial $[u]$).
By Part 3), $(u)$ is an admissible bracketing on $u$.

The admissible bracketing on remaining super Lyndon-Shirshov $T$-words  was considered in the last paragraph of step 3. Note that 
the set of constructed monomials $(u)$ coincides with part of the basis monomials of $H$ described in  items   2c), 2b) of Theorem~\ref{basisHNN}.   

\smallskip
5) Finally, denote by $F(W)$ the subalgebra of $H$ generated by $W$~\eqref{W}.
Let $F(W')$ be the free Lie superalgebra generated by a new set $W'$, which is in a bijective correspondence $\iota:W'\to W$ and elements of $W'$~are assigned with respective parities. 
A basis of $F(W')$ consists of super Lyndon-Shirshov monomials in $W'$ (Theorem~\ref{free}),
that by the natural mapping $\iota:F(W')\to F(W)$ are mapped on the  linearly independent set constructed above~\eqref{w1ws}, so we get an isomorphism.
Hence, $F(W)$ is the free Lie superalgebra with the free generating set $W$ and monomials of  items  2c), 2b) of Theorem~\ref{basisHNN} form its  linear   basis.

The decomposition into direct sum of two subalgebras follows by Part 4) and  item~2 of Theorem~\ref{basisHNN}. By the construction of HNN extension $L$  is a subalgebra of $H$.
\end{proof}

\begin{remark} In general, $F(W)$ is not an ideal.
\end{remark}

\begin{remark}
Using Part 1) one can directly check that the set $W$ generates the free Lie subsuperalgebra of $H$.
We skip this step because this fact is established along with other statements in Part~5).
 \end{remark}

\begin{remark}
The arguments of Part 2) are well-known~\cite{BMPZ}, we included its proof for convenience of the reader.
\end{remark}

\begin{remark}
By considering an ordered linear basis according to direct sum decomposition of Theorem~\ref{structureHNN},
taking respective PBW-basis, we get exactly the basis monomials of the respective universal enveloping algebra~\eqref{UH}.
This remark explains the origin of Theorem~\ref{structureHNN}.
\end{remark}

%*********************************************************************************************************
\section{ Finite generation of a subalgebra}

 \begin{prop}\label{hnn}  Let $\widetilde{L}$ be a finitely presented Lie superalgebra with 
 an ideal $I$  such that $\widetilde{L}/ I$ is a free Lie superalgebra on one free generator $\overline{c}$, where $c$ is a homogeneous element of $\widetilde{L}$ and $\overline{c}$ is its image in $\widetilde{L}/ I$. 
 Then $\widetilde L$ is an HNN-extension, namely,
 there is an isomorphism
 $$  H  := \langle L, t \ | \ [t,a] = d(a), a \in A \rangle  \simeq \widetilde{L}$$
 where $A \subseteq L$ are both finitely generated Lie  subsuperalgebras of $\widetilde{L}$ such that $L \subseteq I$, $d : A \to L$  
 denotes the adjoint superderivation sending $a \in A$ to $[c,a] \in L$ and  the isomorphism $H \simeq \widetilde{L}$  is the identity on $L$ and sends $t$ to $c$.
 \end{prop}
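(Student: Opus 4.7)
The plan is to choose finitely generated subalgebras $A \subseteq L \subseteq I$ by controlled truncation of the $\mathrm{ad}\,c$-tower of generators, and then to show that the HNN extension they define is isomorphic to $\widetilde L$ by constructing an explicit inverse to the obvious map.

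First I would replace the given finite generating set of $\widetilde L$ by one of the form $\{c, y_1, \ldots, y_n\}$ with $y_i \in I$. Because $\widetilde L / I \cong F(\bar c)$, this is done by subtracting from each original generator a suitable Lie polynomial in $c$. Finite presentability is preserved under change of finite generating set, so $\widetilde L$ is the quotient of $F := F\langle c, y_1, \ldots, y_n\rangle$ by the ideal generated by finitely many relations $r_1, \ldots, r_m$. Since the composition $F \twoheadrightarrow \widetilde L \twoheadrightarrow \widetilde L/I \cong F(\bar c)$ has kernel equal to the ideal $F_0$ of $F$ generated by the $y_i$'s, each $r_\ell$ automatically lies in $F_0$.

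Next I would observe that $F_0$ coincides with the Lie subsuperalgebra of $F$ generated by $\{(\mathrm{ad}\,c)^j y_i \mid j \geq 0,\ 1 \leq i \leq n\}$; this follows from a routine Leibniz induction showing that this subalgebra is $\mathrm{ad}\,c$-invariant, hence an ideal of $F$ containing the $y_i$'s. Consequently each $r_\ell$ is a Lie polynomial in finitely many such elements, and one may choose $N$ large enough that every $r_\ell$ involves only $(\mathrm{ad}\,c)^j y_i$ with $j \leq N$. Now define
\begin{align*}
L &:= \langle (\mathrm{ad}\,c)^j y_i \mid 0 \leq j \leq N,\ 1 \leq i \leq n\rangle \subseteq I,\\
A &:= \langle (\mathrm{ad}\,c)^j y_i \mid 0 \leq j \leq N-1,\ 1 \leq i \leq n\rangle \subseteq L.
\end{align*}
Both are finitely generated, and a Leibniz argument yields $[c, A] \subseteq L$, so $d := \mathrm{ad}\,c|_A : A \to L$ is a well-defined superderivation of degree $|c|$.

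With these data in hand I would form $H := \langle L, t \mid [t,a] = d(a),\ a \in A\rangle$ and construct two mutually inverse homomorphisms. The map $\phi : H \to \widetilde L$ that is the identity on $L$ and sends $t \mapsto c$ is clearly well-defined since the HNN relations hold in $\widetilde L$. For the inverse $\psi : \widetilde L \to H$ with $c \mapsto t$ and $y_i \mapsto y_i \in L \subset H$, the key step is to prove by induction on $j$ that inside $H$ one has $(\mathrm{ad}\,t)^j y_i = (\mathrm{ad}\,c)^j y_i \in L$ for every $j \leq N$: the case $j=0$ is trivial, and if $(\mathrm{ad}\,t)^{j-1} y_i = (\mathrm{ad}\,c)^{j-1} y_i$ with $j-1 \leq N-1$, then this element lies in $A$, so the HNN relation gives $(\mathrm{ad}\,t)^j y_i = d((\mathrm{ad}\,c)^{j-1} y_i) = (\mathrm{ad}\,c)^j y_i$. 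Substituting this identification into the polynomial expression for $r_\ell$ in terms of the $(\mathrm{ad}\,c)^j y_i$'s, the element $\psi(r_\ell)$ becomes the image in $H$ of an element of $L$ that vanishes in $\widetilde L$, hence vanishes in $L$ (since $L \hookrightarrow \widetilde L$) and therefore in $H$ (since $L$ embeds in its HNN extension by \cite{L-P-Z}).

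The main obstacle I anticipate is this controlled use of the HNN relation: $[t,a]=d(a)$ is valid in $H$ only for $a \in A$, not on all of $L$, so the inductive identification of $(\mathrm{ad}\,t)^j y_i$ with $(\mathrm{ad}\,c)^j y_i$ can only continue as long as the previous term lies in $A$. Defining $A$ with strict upper bound $N-1$ versus $L$ with upper bound $N$ is precisely what allows the induction to reach $j=N$, which is exactly the range needed to express the defining relations $r_\ell$; any smaller gap or any larger $N$ would break either the inductive step or the finite-generation of $L$.
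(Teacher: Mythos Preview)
Your argument is correct and follows essentially the same route as the paper's proof: both pick generators of $I$, note that the kernel ideal $F_0$ in the free Lie superalgebra is generated as a subalgebra by the $\mathrm{ad}\,c$-tower, truncate at some level $N$ (the paper writes $k_0$) large enough to catch all defining relations, set $L$ and $A$ to be the subalgebras generated by the tower up to levels $N$ and $N-1$ respectively, and then run the key induction showing $(\mathrm{ad}\,t)^j y_i = (\mathrm{ad}\,c)^j y_i$ in $H$ for $j\le N$ using the HNN relation at each step. The only cosmetic difference is the wrap-up: the paper constructs a single map $\gamma:F\to H$ and shows $\mathrm{Ker}(\gamma)=\mathrm{Ker}(\pi)$, whereas you build the candidate inverse $\psi:\widetilde L\to H$ directly and invoke the embedding $L\hookrightarrow H$ from \cite{L-P-Z}; these are equivalent ways to package the same computation.
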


 \begin{proof} 
 Since $\widetilde L$ is finitely generated, there exist homogeneous generators $f_1, \ldots, f_s$ 
 of $I$ as a homogeneous ideal in $\widetilde{L}$. Thus $f_1, \ldots, f_s, c$ is a generating set of the Lie superalgebra  $\widetilde{L}$. 
 
 Let $F$ be the free Lie superalgebra with homogeneous generators $x_1, \ldots, x_s, y$ where $x_i$ has the parity of $f_i$ and $y$ has the parity of $c$.
  Let $$\pi : F \to \widetilde{L}$$ be the epimorphism of Lie superalgebras that sends $x_i$ to $f_i$ and $y$ to $c$.
Let $F_0$ be the ideal of $F$ generated by $x_1, \ldots, x_s$. Then $F/ F_0$ is a free Lie superalgebra on the generator $y$, $F/ F_0 \simeq \widetilde{L} / I$ and $Ker (\pi) = \pi^{-1} (0) \subseteq \pi^{-1} (I) = F_0$.
Since $\widetilde{L}$ is finitely presented, there is a finite set of homogeneous elements $R$ that generates the ideal $Ker (\pi)$ of $F$. Thus $R \subseteq F_0$.

Note that $F_0$ is generated by $Y$, where  
$$Y  := \{ [y^i x_j]\  | \ 1 \leq j \leq s, i \geq 0 \},$$  $[y^i x_j]$ denotes $[y, \ldots, y, x_j]$ 
with right normed brackets and $i$ appearances of $y$. 
This fact is proved by induction, where in case of $y$ being odd 
we have $F/F_0\cong \langle \bar y, [\bar y,\bar y]\rangle_K$ and use \eqref{odd}.
Set 
$$Y_k  := \{ [y^i x_j]\ { |} \  1 \leq j \leq s, 0 \leq  i \leq k \}$$ and $F_k$ be the  
subalgebra of $F$ generated by $Y_k$. Since $R$ is finite there is $k_0$ such that $R \subseteq { F_{k_0}}$.
Set $L := \pi(F_{k_0 })$ and $A := \pi (F_{k_0-1})$.

Define $d : A \to L$ to be  
restriction of the adjoint superderivation
that sends $a \in A$ to $[c,a]$. 
Observe that by our construction $\widetilde L$ is generated by $L\cup\{c\}$.
Then there is an epimorphism of Lie superalgebras 
by universality of the HNN construction
$$\varphi : H  := \langle L, t \ | \ [t,a] = d(a) \hbox{ for } a \in A \rangle \to \widetilde{L}$$
that is the identity on $L$ and $\varphi(t) = c$.  
Since $F$ is free with appropriate parities of the generators
there is an epimorphism of Lie superalgebras $$\gamma : F \to H$$  that sends $x_j$ to $f_j$ for $ 1 \leq j \leq s$ and $y$ to $t$.

We claim that for $ 0\le  i \leq k_0 $, $ 1 \leq j \leq s$  that $\gamma ([y^i x_j]) = [c^i f_j]$ ,
where the latter elements belong to $L$ by construction).
The case $i=0$ is trivial. We proceed by induction, let the claim is valid for $i-1$ that is less than $k_0$, then
$$\gamma ([y^i x_j]) = \gamma([y,  [y^{i-1} x_j]  ]) = [t, \gamma (  [y^{i-1} x_j])] = [t, [c^{i-1} f_j]]$$
Since  by construction $[c^{i-1} f_j] \in A$ we have that
$$ [t, [c^{i-1} f_j]] = d( [c^{i-1} f_j]) = [c,   [c^{i-1} f_j]] = [c^i f_j] = \pi([y^i x_j])$$
Hence $\gamma |_{F_{k_0}} = \pi |_{F_{k_0}}$ and since $R \subseteq F_{k_0}$
we have that
$$\gamma(R)  = \pi(R) = 0, \hbox{ hence } R \subseteq Ker(\gamma) \hbox{ and  so } Ker (\pi) \subseteq Ker(\gamma)$$

Looking at images of the generators, we see that
$\varphi \gamma = \pi$, so  $Ker(\gamma) \subseteq Ker(\pi)$, hence $Ker(\pi) = Ker (\gamma)$. This implies that $\varphi$ is an isomorphism.
\end{proof}

 The following result is a Lie superalgebra version of \cite[Cor. 5.4]{W}. 
 It is worth noting that the proof of  \cite[Cor. 5.4]{W} works not only for soluble Lie algebras but ones that do not contain non-abelian free Lie subalgebras.  The following result was stated in the introduction as Theorem B.

 \begin{theorem} \label{sol1} Let $\widetilde{L}$ be a finitely presented Lie superalgebra   
 with an ideal $I$
 such that $\widetilde{L}/ I$ is a free Lie superalgebra on one  homogeneous generator. Assume further that $\widetilde{L}$ does not contain 
 a non-abelian free Lie superalgebra. 
 Then $I$ is finitely generated as a Lie superalgebra.
 \end{theorem}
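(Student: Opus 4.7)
The plan is to invoke Proposition~\ref{hnn} to realize $\widetilde L$ as an HNN extension
$$
\widetilde{L}\;\cong\; H \;=\; \langle L, t \mid [t,a]=d(a),\ a\in A\rangle,
$$
where $A\subseteq L\subseteq I$ are finitely generated Lie subsuperalgebras of $\widetilde L$, and $t$ corresponds to a fixed homogeneous lift $c$ of the free generator $\bar c$ of $\widetilde L/I$. The goal is to identify $I$ with the finitely generated subalgebra $L$.

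The crucial step is to prove that $A=L$. Assume for contradiction that $A$ is a proper subalgebra of $L$, so that a homogeneous basis $X$ of $L$ modulo $A$ is non-empty. Then Theorem~\ref{structureHNN} applies and yields the direct-sum decomposition of Lie superalgebras $H\cong L\oplus F(W)$, with $F(W)$ free on the set~\eqref{W}. Since $X$ is non-empty, $W$ already contains at least two distinct elements: $t$ itself (the case $s=0$) and $[t,x]$ for any $x\in X$ (the case $s=1$). Hence $F(W)$ is a free Lie superalgebra of rank $\geq 2$, and it sits inside $H\cong\widetilde L$ as a non-abelian free Lie subsuperalgebra—contradicting the standing hypothesis.

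Thus $A=L$, which forces $d$ to send $L$ into itself, and the HNN relations collapse $H$ to
$$
H \;=\; L\oplus Kt \qquad\text{or}\qquad H \;=\; L\oplus Kt\oplus K[t,t]
$$
as a vector space, according to whether $t$ is even or odd; in the odd case closure uses $[[t,t],t]=0$, which follows from the super Jacobi identity together with $\mathrm{char}\,K\neq 3$. The canonical projection $H\to \widetilde L/I\cong F(\bar c)$ kills $L$ (since $L\subseteq I$) and sends the complementary summand onto $F(\bar c)$ by matching parities and (super-)dimensions, hence isomorphically. Comparing kernels gives $I=L$, which is finitely generated by the construction of Proposition~\ref{hnn}.

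The particular statement for soluble superalgebras is then immediate, because a non-abelian free Lie superalgebra is not soluble, so any finitely presented soluble $\widetilde L$ automatically satisfies the main hypothesis. The only genuine obstacle in the argument is the invocation of Theorem~\ref{structureHNN}: once the structural splitting $H\cong L\oplus F(W)$ is in hand, both the contradiction ruling out $A\subsetneq L$ and the final identification $I=L$ reduce to routine bookkeeping about the quotient map.
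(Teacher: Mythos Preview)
Your proof is correct and follows essentially the same route as the paper: invoke Proposition~\ref{hnn} to write $\widetilde L$ as an HNN extension, use Theorem~\ref{structureHNN} to derive a contradiction from $A\subsetneq L$ via the non-abelian free subalgebra $F(W)$, and in the remaining case $A=L$ observe that $H$ is the semidirect product $L\leftthreetimes\langle t,[t,t]\rangle_K$, whence $I=L$. The only differences are cosmetic: you spell out two explicit elements of $W$ and give a slightly more detailed dimension-count for the identification $I=L$, while the paper just cites its earlier remark on the $A=L$ case.
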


 \begin{proof}
 By Proposition \ref{hnn}  $\widetilde{L}$ is a HNN extension 
 $$ \widetilde L\cong \langle L, t \ | \ [t,a] = d(a) , a \in A \rangle$$ with both $A \subseteq L$ finitely generated  subalgebras  such that $L \subseteq I$.

 Suppose that $L \not= A$. 
 Then by Theorem~\ref{structureHNN} 
 $\widetilde L$ has  a subalgebra $F(W)$ that is a non-abelian free Lie superalgebra,  contradiction.
 
 Finally, we have the case $L = A$.
 Then  the HNN extension is a semi-direct product $\widetilde L\cong L \leftthreetimes \langle t, [t,t]\rangle_K$.  
 Hence, $I=L$ is finitely generated as a Lie superalgebra.

\end{proof}

%**************************************************************************************

\end{document}